\documentclass[11pt]{article}
\usepackage{graphicx}
\usepackage{amssymb, amsthm, amsmath, amsfonts}
\usepackage{mathrsfs,microtype}
\usepackage{epstopdf,color}
\usepackage{tikz}
\usepackage{tkz-berge}
\usetikzlibrary{positioning}
\usepackage{hyperref}
\usepackage{verbatim}

\usepackage{mathtools}

\DeclarePairedDelimiter\abs{\lvert}{\rvert}

\usetikzlibrary{arrows,decorations.pathmorphing,decorations.footprints,fadings,calc,trees,mindmap,shadows,decorations.text,patterns,positioning,shapes,matrix,fit,through,decorations.pathreplacing}
\usepackage[margin=1in]{geometry}

\theoremstyle{plain}
\newtheorem{theorem}{Theorem}[section]

\newtheorem{conjecture}[theorem]{Conjecture}

\theoremstyle{definition}
\newtheorem{definition}[theorem]{Definition}

\theoremstyle{remark}

\title{Independent sets in $n$-vertex $k$-chromatic $\ell$-connected graphs}
\author{John Engbers\thanks{john.engbers@marquette.edu; Department of Mathematical and Statistical Sciences, Marquette University, Milwaukee, WI 53201. Research supported by the Simons Foundation grant 524418.} \and Lauren Keough\thanks{keoulaur@gvsu.edu; Department of Mathematics, Grand Valley State University, Allendale, MI 49401.} \and Taylor Short\thanks{shorttay@gvsu.edu; Department of Mathematics, Grand Valley State University, Allendale, MI 49401.}}
\date{\today }

\begin{document}

\maketitle

\begin{abstract}
We study the problem of maximizing the number of independent sets in $n$-vertex $k$-chromatic $\ell$-connected graphs. First we consider maximizing the total number of independent sets in such graphs with $n$ sufficiently large, and for this problem we use a stability argument to find the unique extremal graph. We show that our result holds within the larger family of $n$-vertex $k$-chromatic graphs with minimum degree at least $\ell$, again for $n$ sufficiently large. 
We also maximize the number of independent sets of each fixed size in $n$-vertex 3-chromatic 2-connected graphs. We finally address maximizing the number of independent sets of size 2 (equivalently, minimizing the number of edges) over all $n$-vertex $k$-chromatic $\ell$-connected graphs.
\end{abstract}

%=============================================================================
\section{Introduction and Statement of Results}
%=============================================================================

Given a finite, simple graph $G = (V(G),E(G))$, an independent set $I$ is a subset of $V(G)$ so that if $v,w \in I$, then $vw \notin E(G)$.  The \emph{size} of an independent set is $|I|$.  We will let $i(G)$ denote the number of independent sets in $G$ and $i_t(G)$ denote the number of independent sets of size $t$ in $G$.  The quantity $i(G)$ has also been called the \emph{Fibonacci number} of $G$ \cite{ProdingerTichy} and the \emph{Merrifield-Simmons index} of $G$ \cite{MerrifieldSimmons}.

There has been a large number of papers devoted to finding the maximum and minimum values of $i(G)$ and $i_t(G)$ as $G$ ranges over some family of graphs.  For a sampling of these results, we refer the reader to two surveys \cite{Cutler,Zhao} and the references found therein.

A \emph{proper vertex coloring} of a graph $G$ is an assignment of a color to each vertex so that no edge is monochromatic.  A graph $G$ is \emph{$k$-chromatic} if there exists a proper coloring using $k$ colors but not one with $k-1$ colors. We call $G$ \emph{$k$-critical} if it is $k$-chromatic and every proper subgraph of $G$ is at most $(k-1)$-chromatic. Finally, a graph $G$ is \emph{$\ell$-connected} if $|V(G)|>\ell$ and any graph obtained by deleting fewer than $\ell$ vertices is connected. Recently Fox, He, and Manners \cite{FoxHeManners} proved an old conjecture of Tomescu by finding the $n$-vertex $k$-chromatic connected graph with the maximum number of proper vertex colorings that uses $k$ colors.

This focus of this note is on maximizing $i(G)$ and $i_t(G)$ within the family of $n$-vertex $k$-chromatic $\ell$-connected graphs.  When $\ell=0$ and $\ell=1$, the maximum number of independent sets, and independent sets of each fixed size $t$, in these families was determined in \cite{EngbersErey}.  Our first result generalizes this to maximizing $i(G)$ when $\ell>1$ for $n$ large.  Before we state it, we first define the extremal graphs for the various values of $k$ and $\ell$. Recall that for graphs $G_1$ and $G_2$, the graph $G_1 \vee G_2$ has vertex set $V(G_1) \sqcup V(G_2)$ and edge set $E(G_1) \cup E(G_2) \cup \{xy:x \in V(G_1),y \in V(G_2)\}$. We denote the complete and empty graphs on $n$ vertices by $K_n$ and $E_n$, respectively. 

\begin{definition}
Fix $k \geq 2$ and $\ell \geq 1$.  For $k \leq \ell$, let $G^*:= (K_{k-1} \cup E_{\ell-k+1}) \vee E_{n-\ell}$, and for $k > \ell$ let $G^* := K_\ell \vee (K_{k-\ell} \cup E_{n-k})$. See Figure \ref{fig-G^*}.
\end{definition}

\begin{figure}[ht!]
    \centering
\begin{tikzpicture}[scale=.7]
		\node at (-1,3.75) {$G^*$ ($k \leq \ell$)};
		\node (v1) at (1,1) [circle,draw] {\fontsize{7}{5.2}\selectfont {$k$-1}};
		\node (v2) at (1,2) [circle,draw,scale=.4,fill] {};
		\draw[densely dotted] (.35,.35) -- (.35,2.35);
		\draw[densely dotted] (.35,2.35) -- (1.65,2.35);
		\draw[densely dotted] (1.65,2.35) -- (1.65,.35);
		\draw[densely dotted] (1.65,.35) -- (.35,.35);
		\node (v3) at (3,0) [circle,draw,scale=.4,fill] {};
		\node (v4) at (3,1) [circle,draw,scale=.4,fill] {};
		\node (v5) at (3,2) [circle,draw,scale=.4,fill] {};
		\node (v6) at (3,3) [circle,draw,scale=.4,fill] {};
		\draw[densely dotted] (2.65,-.35) -- (3.35,-.35);
		\draw[densely dotted] (3.35,-.35) -- (3.35,3.35);
		\draw[densely dotted] (3.35,3.35) -- (2.65,3.35);
		\draw[densely dotted] (2.65,3.35) -- (2.65,-.35);
		\node at (3,3.75) {$n-\ell$};
		\node at (1,2.75) {$\ell$};

		\foreach \from/\to in {v1/v3,v1/v4,v1/v5,v1/v6,v2/v3,v2/v4,v2/v5,v2/v6}
		\draw (\from) -- (\to);
	\end{tikzpicture}
\qquad\qquad\qquad %OR \qquad
\begin{tikzpicture}[scale=.8]
		\node at (0,3.75) {$G^*$ ($k>\ell$)};
			\node (v1) at (1,.8) [circle,draw,scale=.4,fill] {};
		\node (v2) at (2,2) [circle,draw,scale=.4,fill] {};
		\node at (2,2.6) {$\ell$};
		\draw[densely dotted] (1.7,2.3) -- (2.3,2.3);
		\draw[densely dotted] (2.3,2.3) -- (2.3,.7);
		\draw[densely dotted] (2.3,.7) -- (1.7,.7);
		\draw[densely dotted] (1.7,.7) -- (1.7,2.3);
		\node (v3) at (1,2.2) [circle,draw,scale=.4,fill] {};
		\draw[densely dotted] (-.1,2.5) -- (1.3,2.5);
		\draw[densely dotted] (1.3,2.5) -- (1.3,.5);
		\draw[densely dotted] (1.3,.5) -- (-.1,.5);
		\draw[densely dotted] (-.1,.5) -- (-.1,2.5);
		\node at (.6,2.8) {$k-\ell$};
		\node (v4) at (2,1) [circle,draw,scale=.4,fill] {};
		\node (v5) at (4,2) [circle,draw,scale=.4,fill] {};
		\node (v6) at (4,.5) [circle,draw,scale=.4,fill] {};
		\node (v7) at (4,1.25) [circle,draw,scale=.4,fill] {};
		\node (v8) at (4,2.75) [circle,draw,scale=.4,fill] {};
		\draw[densely dotted] (3.7,.2) -- (4.3,.2);
		\draw[densely dotted] (4.3,.2) -- (4.3,3.05);
		\draw[densely dotted] (4.3,3.05) -- (3.7,3.05);
		\draw[densely dotted] (3.7,3.05) -- (3.7,.2);
		\node at (4,3.35) {$n-k$};
		\node (v9) at (0.2,1.5) [circle,draw,scale=.4,fill] {};

		\foreach \from/\to in {v9/v1,v9/v2,v9/v3,v9/v4,v1/v3,v1/v2,v2/v3,v2/v4,v4/v3,v1/v4,v4/v5,v4/v6,v4/v7,v4/v5,v2/v5,v2/v6,v2/v7,v2/v8,v4/v8}
		\draw (\from) -- (\to);
	\end{tikzpicture}

    \caption{The graph $G^*$ for the two possibilities for $k$ and $\ell$.}
    \label{fig-G^*}
\end{figure}
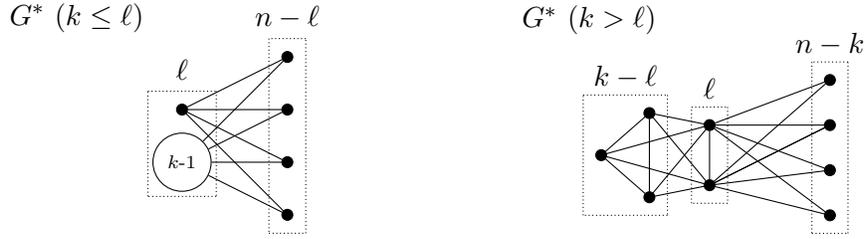
So, for example, when $k=2\leq \ell$  we have $G^*=K_{\ell,n-\ell}$, and the fact that $i(G) \leq i(G^*)$ for all $n$-vertex $\ell$-connected bipartite graphs $G$, with equality if and only if $G=G^*$, appears as Corollary 2.2 in \cite{AlexanderCutlerMink} (recall that an $\ell$-connected graph has minimum degree at least $\ell$). When $\ell=1$ and $k > 1$ the graph $G^*$ is formed from a $k$-clique with $n-k$ pendants attached to one vertex in the clique, and the fact that $i(G) \leq i(G^*)$ for all connected $k$-chromatic graphs $G$, with equality if and only if $G=G^*$, appears as Corollary 3 in \cite{EngbersErey}. (Viewing $G^*$ naturally as $K_k \cup E_{n-k}$ in the case where $\ell=0$, the analogous result appears as Corollary 2 in \cite{EngbersErey}.)

We show that this result is in fact true for all $k>2$, $\ell \geq 1$, and $n$ large.

\begin{theorem}\label{thm-i(G)}
Let $k>2$ and $\ell \geq 1$ be fixed.  
If $n > 2(k+\ell+2)\binom{6(k+\ell)}{\ell}$ and $G$ is an $n$-vertex $k$-chromatic $\ell$-connected graph, then
\[
i(G) \leq i(G^*)=\begin{cases} 
      2^{n-\ell}+k2^{\ell-k+1} -1, & \quad k \leq \ell \\
      (k-\ell+1)2^{n-k}+\ell, & \quad k > \ell 
  \end{cases},
\] 
with equality if and only if $G=G^*$.
\end{theorem}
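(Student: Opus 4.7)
The plan is a stability argument. Let $G$ be an $n$-vertex $k$-chromatic $\ell$-connected graph, assume $i(G) \geq i(G^*)$, and aim to conclude $G = G^*$. Fix a maximum independent set $S$ in $G$; set $\alpha = |S|$, $R = V(G) \setminus S$, and $r = |R|$. By $\ell$-connectedness every $v \in S$ has $N(v) \subseteq R$ with $|N(v)| \geq \ell$, so $r \geq \ell$. The central identity
\[
i(G) = \sum_{J \subseteq R,\, J \text{ independent}} 2^{|S \setminus N(J)|}
\]
arises by conditioning each independent set $I$ of $G$ on $J = I \cap R$; maximality of $S$ forces every $v \in R$ to have a neighbor in $S$, so each nonempty $J$ contributes at most $2^{\alpha - 1}$.

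The first main step is to show that $r$ is bounded by a constant depending only on $k$ and $\ell$. The threshold $n > 2(k+\ell+2)\binom{6(k+\ell)}{\ell}$ in the hypothesis suggests a target like $r \leq 6(k+\ell)$. If $r$ is large, the identity combined with an upper bound on $i(G[R])$ coming from the $k$-chromatic constraint on $G[R]$ gives $i(G) < i(G^*)$ (noting $i(G^*) \geq 2^{n - \max(k,\ell)}$): the exponential loss in $2^\alpha = 2^{n-r}$, combined with $\ell$-connectedness forcing $|N(J) \cap S|$ to be large for most $J$, overwhelms the at-most $2^r$ nontrivial summands coming from $R$. This is the technical crux of the argument.

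With $r$ bounded, partition $S$ by type: $v \in S$ has type $N(v) \subseteq R$. Since there are at most $2^r$ types, pigeonhole yields a common type $T$ shared by $A \subseteq S$ with $|A| \geq \alpha/2^r$, which is $\Theta(n)$. Decomposing independent sets of $G$ according to whether they meet $T$ yields
\[
i(G) = (2^{|A|} - 1)\, i(G - A - T) + i(G - A),
\]
since any independent set meeting $T$ must exclude all of $A$. Because $|T| = \deg v \geq \ell$ for $v \in A$, comparing the resulting upper bound against the closed form for $i(G^*)$ forces $|T| = \ell$ exactly (any excess is fatal for $n$ large). Together with $\chi(G) = k$, this pins down $G[R]$: when $k \leq \ell$ the induced subgraph must maximize $i$ among $(k-1)$-chromatic graphs on $\ell$ vertices, giving $G[R] = K_{k-1} \cup E_{\ell-k+1}$; when $k > \ell$ the set $T$ plays the role of $K_\ell$ and $G[R]$ must contain $K_k$ with the remaining $k-\ell-1$ vertices forming the rest of the $K_{k-\ell}$-block of $G^*$. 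A final case analysis on the types available to vertices of $S \setminus A$ (using both maximality of $S$ and the extremality of $i(G)$) eliminates any deviation and identifies $G$ with $G^*$.

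The principal obstacle is the bound on $r$ in the second paragraph: it must be simultaneously effective (to produce an explicit $n$-threshold), quantitative (to compare precisely with $i(G^*)$), and compatible with both the $k$-chromatic and $\ell$-connected constraints at once. Navigating this interplay is what produces the somewhat unwieldy threshold on $n$; once it is in hand, the remaining steps are forced.
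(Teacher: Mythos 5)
Your framework — a stability argument built around a maximum independent set $S$, with the identity
\[
i(G) \;=\; \sum_{\substack{J \subseteq R\\ J\ \text{independent}}} 2^{\,|S\setminus N(J)|},
\]
a pigeonhole step to find a large subset $A\subseteq S$ with a small common neighborhood $T$, and a final structure identification — runs parallel to the paper's approach, which instead starts from a maximum matching $M$ and extracts an $\ell$-set $L\subseteq V(M)$ with $\Theta(n)$ common neighbors via pigeonhole over $\binom{|V(M)|}{\ell}$ $\ell$-subsets. Your later steps (the decomposition $i(G)=(2^{|A|}-1)\,i(G-A-T)+i(G-A)$ and the claim that $|T|=\ell$) are sound once one invokes $\chi(G-A-T)\ge k-|T|$ together with Theorem~\ref{thm-EngbersEreyIndSets}; the reduction to identifying $G-T$ with the analogue of the paper's $G'$ then proceeds along the same lines as the paper's Case~1/Case~2 analysis.

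The genuine gap is in the step you yourself flag as the ``technical crux,'' namely bounding $r=|R|$. You propose to get a contradiction from the identity together with the $k$-chromatic bound on $i(G[R])$ and the observation that nonempty $J$ contribute at most $2^{\alpha-1}$. That does not suffice: with only $|N(J)\cap S|\ge 1$ and $i(G[R])\le k\,2^{r-k+1}$, the identity gives $i(G)\le 2^{n-r-1}+k\,2^{n-k}$, which is \emph{never} smaller than $i(G^*)$ (for $k>\ell$ one has $k\ge k-\ell+1$, and for $k\le\ell$ the term $k\,2^{n-k}$ dwarfs $2^{n-\ell}$), so no contradiction for large $r$ arises. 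Your phrase ``$\ell$-connectedness forcing $|N(J)\cap S|$ to be large for most $J$'' also points at the wrong source. What actually makes this step work is maximality of $S$: if $J\subseteq R$ is independent then $(S\setminus N(J))\cup J$ is independent, so $|N(J)\cap S|\ge|J|$ for \emph{every} independent $J$. Feeding $2^{-|N(J)\cap S|}\le 2^{-|J|}$ into your identity yields $i(G)\le 2^{n-r}\sum_{J\subseteq R}2^{-|J|}=2^n(3/4)^r$, which is exactly the shape of bound the paper obtains via the matching ($i(G)\le 3^{|M|}2^{n-2|M|}=2^n(3/4)^{|M|}$), and which does force $r\le 3(k+\ell)$ under the hypothesis $i(G)\ge i(G^*)>2^{n-k-\ell}$. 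So your plan can be rescued, but only by replacing the tool you named with this maximality observation (or by using the paper's matching bound directly); as written, the crux step would fail.
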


In fact, we prove the following more general result, from which Theorem \ref{thm-i(G)} follows as $\ell$-connected graphs have minimum degree at least $\ell$.

\begin{theorem}\label{thm-asympresult}
Let $k>2$ and $\ell \geq 1$ be fixed.  
If $n > 2(k+\ell+2)\binom{6(k+\ell)}{\ell}$ and $G$ is an $n$-vertex $k$-chromatic graph with minimum degree at least $\ell$, then
\[
i(G) \leq i(G^*)=\begin{cases} 
      2^{n-\ell}+k2^{\ell-k+1} -1, & \quad k \leq \ell \\
      (k-\ell+1)2^{n-k}+\ell, & \quad k > \ell 
  \end{cases},
\]
with equality if and only if $G=G^*$.
\end{theorem}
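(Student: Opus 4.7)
The plan is to prove this by a stability argument. Let $G$ be an $n$-vertex $k$-chromatic graph with $\delta(G) \geq \ell$ and suppose $i(G) \geq i(G^*)$; the goal is to conclude $G = G^*$.

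Step 1 is to locate a small ``dominating core.'' Fix a $k$-critical subgraph $H \subseteq G$ (so $|V(H)| \geq k$ and $\delta(H) \geq k-1$) and enlarge $V(H)$ by iteratively adjoining any vertex that currently has a neighbor outside the set. Call the resulting set $C$; then $T := V(G) \setminus C$ is independent in $G$ and each $v \in T$ satisfies $N(v) \subseteq C$. I show $|C| \leq 6(k+\ell)$: if the core were any larger, the accumulated out-of-$H$ edges (both within $H$ and generated by the enlargement) would force $i(G) < i(G^*)$, contradicting the assumption. The key bookkeeping tool is the identity
\[
i(G) = \sum_{\substack{I_C \subseteq C \\ I_C \text{ indep in } G[C]}} 2^{|T| - |N_T(I_C)|},
\]
where $N_T(I_C) := \{v \in T : N(v) \cap I_C \neq \emptyset\}$, which is compared term-by-term with the analogous expression for $G^*$.

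Step 2 applies pigeonhole. Each $v \in T$ has $N(v) \subseteq C$ with $|N(v)| \geq \ell$, and the number of $\ell$-subsets of $C$ is at most $\binom{6(k+\ell)}{\ell}$; combined with $|T| \geq n - 6(k+\ell)$ and the hypothesis on $n$, at least $2(k+\ell+2)$ vertices of $T$ share the same $\ell$-element common neighborhood $N \subseteq C$. Step 3 uses this large ``twin'' class to force uniformity across all of $T$: any $v \in T$ with $N(v) \neq N$ or $|N(v)| > \ell$ can be replaced by a vertex with neighborhood exactly $N$, yielding a graph $G'$ with $\chi(G') = k$, $\delta(G') \geq \ell$, and $i(G') > i(G)$, contradicting extremality. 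Once $T$ consists of $n - |C|$ identical pendants attached to $N$, a short case analysis ($k \leq \ell$ versus $k > \ell$) optimizing the structure of $G[C]$ and the choice of $N$ subject to $\chi(G) = k$ yields $G = G^*$ as the unique maximizer.

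The main obstacle is Step 1, the core-size bound $|C| \leq 6(k+\ell)$. Each iteration of the enlargement adds a vertex contributing at least one extra out-of-$H$ edge, and the crux is to quantify this loss precisely enough that more than roughly $6(k+\ell) - k$ additions would force $i(G) < i(G^*)$; this is where the constants $6(k+\ell)$ and $2(k+\ell+2)\binom{6(k+\ell)}{\ell}$ in the hypothesis arise. A secondary subtlety is ensuring that the replacement in Step 3 preserves $\chi = k$, which holds because $C \supseteq V(H)$ with $H$ $k$-critical and the replacement modifies only vertices of the independent set $T$.
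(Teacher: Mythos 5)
The overall strategy (locate a bounded ``core'' covering all edges, pigeonhole on $\ell$-subsets of the core, then force the extremal structure) runs parallel to the paper's, but two of your key steps are not established and are genuinely nontrivial. First, the bound $|C| \leq 6(k+\ell)$: your core starts from a $k$-critical subgraph $H$, and $|V(H)|$ has no a priori bound --- if $G$ is itself $k$-critical, $H = G$ and $C = V(G)$. You propose to derive the bound from a term-by-term comparison using the identity $i(G) = \sum_{I_C} 2^{|T| - |N_T(I_C)|}$, but you never carry this out and it is not clear how to make it work, since that identity depends on both $|C|$ and the edge pattern between $C$ and $T$. The paper sidesteps this entirely: it bounds the size of a maximum matching $M$ via $i(G) \leq 3^{|M|}2^{n-2|M|}$, which forces $|M| \leq 3(k+\ell)$ directly; the endpoints of $M$ then form a vertex cover $J$ with $|J| \leq 6(k+\ell)$, no $k$-critical subgraph required. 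Your ``accumulated out-of-$H$ edges'' heuristic is pointing at the same phenomenon, but a matching is the right combinatorial object to extract it cleanly.

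Second, the local-switching claim in Step 3 is unjustified and does not obviously hold. If $v \in T$ has $|N(v)| > \ell$ with $N \subseteq N(v)$, shrinking to $N$ deletes edges and strictly increases $i$, fine. But if $|N(v)| = \ell$ with $N(v) \neq N$, you are deleting some edges and \emph{adding} others, and whether $i(G') > i(G)$ reduces to showing that, over independent sets $I$ of $G - v$, the event $I \cap N = \emptyset$ is strictly more likely than $I \cap N(v) = \emptyset$. That comparison depends on the structure of $G$ (presumably you want to exploit the $2(k+\ell+2)$ existing pendants on $N$), and you give no argument. You also do not verify $\chi(G') \leq k$ when $\ell \geq k$ (you only argue $\chi(G') \geq k$), nor do you supply the final case analysis. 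The paper avoids local switching entirely: once it has $L$ with $cn$ common neighbors, it splits $i(G)$ into independent sets hitting $L$ (at most $2^{\ell} \cdot 2^{n - cn - \ell}$, negligible) and independent sets in $G' := G - L$, and then applies the $k$-chromatic bounds $i(G') \leq (m+1)2^{|V(G')|-m}$ and the component refinement from \cite{EngbersErey} to pin down $G'$, from which the structure of $G$ follows. If you want to pursue the switching route, the missing strict-increase lemma and the $\chi \leq k$ check are the real work; as written the proposal has the right skeleton but leaves the load-bearing steps as assertions.
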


The proof of Theorem \ref{thm-asympresult} uses a stability technique, and proceeds by showing that any graph $G$ satisfying $i(G) \geq i(G^*)$ must have a similar structure to $G^*$ in that $G$ must have a large complete bipartite subgraph $K_{\ell,cn}$ for some constant $c$.  It then breaks into two cases depending on the values of $k$ and $\ell$, where the count of $i(G)$ is driven by those independent sets that completely avoid the size $\ell$ part of $K_{\ell,cn}$.

\medskip

In \cite{EngbersErey}, the authors ask what can be said in the family of $n$-vertex $k$-chromatic $\ell$-connected graphs for independent sets of size $t$ when $\ell>1$.  We also provide some results here for specific values of $k$, $\ell$, and $t$. The first new case is that of $2$-connected $3$-chromatic graphs; we remark that results on maximizing $i(G)$ over all $n$-vertex $2$-connected graphs appear in \cite{HuaZhang}.

\begin{definition}
A \emph{theta graph} joins vertices $v$ and $w$ with three internally disjoint paths of (edge) lengths $a$, $b$, and $c$. We denote this graph by $\theta_{a,b,c}$.
\end{definition} 
We have, for example, that $\theta_{2,2,2} = K_{2,3}$, and it is apparent that $|V(\theta_{a,b,c})| = a + b + c-1$.  Note that when $a$ is even, the corresponding $vw$ path has an odd number of internal vertices. We now state the theorem.

\begin{theorem}\label{thm-2con3chrom}
Let $n \geq 4$, and let $G$ be an $n$-vertex $3$-chromatic $2$-connected graph.  Then we have the following:
\begin{itemize}
	\item if $n$ is odd, then $i_2(G) \leq i_2(C_n)$ with equality if and only if $G=C_n$;
    \item if $n$ is even, then $i_2(G) \leq i_2(\theta_{a,b,c})$, where at least one of $a$, $b$, or $c$ is even, with equality if and only if $G = \theta_{a,b,c}$ where at least one of $a$, $b$, or $c$ is even; and
	\item for all $3 \leq t \leq n-2$, $i_t(G) \leq i_t(K_2 \vee E_{n-2})$, and for $n \geq 5$ we have equality if and only if $G=K_2 \vee E_{n-2}$.
\end{itemize}
\end{theorem}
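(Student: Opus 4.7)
My plan is to handle the three bullets separately, reducing the first two to a minimum-edge-count problem and tackling the third by induction on $n$.

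For the first two bullets, I would start from the identity $i_2(G) = \binom{n}{2} - |E(G)|$, which reduces them to minimizing $|E(G)|$ over 2-connected 3-chromatic $n$-vertex graphs. Every 2-connected graph has $|E(G)| \geq n$, with equality only for $G = C_n$. When $n$ is odd, $C_n$ is 3-chromatic, yielding the first bullet. When $n$ is even, $C_n$ is bipartite, so $|E(G)| \geq n+1$. To pin down the extremal graphs I would use the handshaking lemma: the degree sum is $2n+2$, and with $\delta(G) \geq 2$ the only possible degree sequences are $(3,3,2,\ldots,2)$ and $(4,2,\ldots,2)$. The latter forces the unique degree-$4$ vertex to be a cut vertex, which is ruled out by 2-connectivity; the former forces $G = \theta_{a,b,c}$ with $a+b+c = n+1$. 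For 3-chromaticity, the existence of an odd cycle is equivalent to not all of $a, b, c$ sharing parity, and combined with $a+b+c$ being odd this becomes the condition that at least one of $a, b, c$ is even.

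For the third bullet, I would first show that any 2-connected 3-chromatic $G$ on $n$ vertices satisfies $\alpha(G) \leq n-2$, with equality if and only if $G = K_2 \vee E_{n-2}$. If $\alpha(G) = n-1$ with complement vertex $v$, then 2-connectivity forces $v$ adjacent to all others, making $G$ a star and so bipartite. If $\alpha(G) = n-2$ with complementary pair $\{u,v\}$, then $uv$ must be an edge (otherwise $G$ is bipartite with parts $\{u,v\}$ and $V \setminus \{u,v\}$), and 2-connectivity then forces both $u$ and $v$ to be universal, giving $G = K_2 \vee E_{n-2}$. When $G = K_2 \vee E_{n-2}$, every independent set of size at least $2$ lies in the $E_{n-2}$ part, so $i_t(G) = \binom{n-2}{t}$ for $t \geq 2$. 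Otherwise I would prove $i_t(G) < \binom{n-2}{t}$ for all $3 \leq t \leq n-2$ by induction on $n$, with base case $n=5$ handled by enumeration. For the inductive step I would use
\[ i_t(G) = i_t(G - v) + i_{t-1}(G - N[v]) \]
for a vertex $v$ chosen so that $G - v$ remains 2-connected. It is a classical fact that such $v$ exists whenever the 2-connected graph $G$ is not a cycle; the case $G = C_n$ with $n$ odd is handled by the direct formula $i_t(C_n) = \frac{n}{n-t}\binom{n-t}{t}$. Since $G - v$ is 2-connected on $n-1$ vertices, $\chi(G-v) \in \{2, 3\}$: in the 3-chromatic case the inductive hypothesis gives $i_t(G - v) \leq \binom{n-3}{t}$, and in the bipartite case the analogous bound $i_t(G-v) \leq i_t(K_{2,n-3}) = \binom{n-3}{t}$ would follow from an $i_t$-specific refinement of Corollary 2.2 of \cite{AlexanderCutlerMink}. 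Combined with $i_{t-1}(G - N[v]) \leq \binom{n-1-d(v)}{t-1} \leq \binom{n-3}{t-1}$ from $d(v) \geq 2$, this yields $i_t(G) \leq \binom{n-2}{t}$.

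The hard part will be the equality analysis and the bipartite sub-case. For equality: tracing through the conditions $i_t(G-v) = \binom{n-3}{t}$, $d(v) = 2$, and $G - N[v]$ empty shows that $v$ must attach to the two vertices comprising the $K_2$ part of $G - v = K_2 \vee E_{n-3}$, which forces $G = K_2 \vee E_{n-2}$ and contradicts our assumption, giving strict inequality. The main obstacle I anticipate is the bipartite sub-case: Corollary 2.2 of \cite{AlexanderCutlerMink} bounds the total independent-set count $i(G) \leq i(K_{\ell, n-\ell})$ for $\ell$-connected bipartite graphs, but we need a per-$t$ bound, which likely requires a refinement of their shifting argument or a direct structural count based on the bipartition $A \cup B$ of $G - v$, exploiting that every vertex has degree at least $2$ in $G - v$.
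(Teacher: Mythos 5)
Your treatment of the first two bullets matches the paper's argument closely: reduce $i_2$ to edge-minimization, use $|E(G)|\geq n$ with equality only for $C_n$, and in the even case classify the graphs with $|E(G)|=n+1$ via the handshaking lemma as theta graphs, then sort out which ones are $3$-chromatic. That part is fine.

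For the third bullet, however, there is a genuine gap, and it sits exactly where you flag it. The paper simply invokes Theorem~\ref{thm-EG} (Engbers--Galvin): for every $n$-vertex graph $G$ with $\delta(G)\geq 2$ and every $t\geq 3$, $i_t(G)\leq i_t(K_{2,n-2})$, with equality for $n\geq 5$, $3\leq t\leq n-2$ iff $G=H\vee E_{n-2}$ for some $2$-vertex $H$. Since a $2$-connected graph has $\delta\geq 2$, and $K_{2,n-2}$ is bipartite while $i_t(K_{2,n-2})=i_t(K_2\vee E_{n-2})$ for $t\geq 3$, the third bullet follows in two lines. Your alternative is an induction on $n$ via $i_t(G)=i_t(G-v)+i_{t-1}(G-N[v])$, but that induction bottoms out in the case $\chi(G-v)=2$: you then need $i_t(H)\leq\binom{n-3}{t}$ for a $2$-connected bipartite $H$ on $n-1$ vertices. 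You acknowledge you don't have this and would need ``an $i_t$-specific refinement'' of the Alexander--Cutler--Mink total-count bound. But that refinement, for graphs with $\delta\geq 2$, is precisely Theorem~\ref{thm-EG} — i.e., the ingredient the paper uses. So the induction does not actually reduce the problem: it leaves the essential content unproven, and it is not obtained more cheaply by restricting to $3$-chromatic graphs, since the reduction escapes that class. (There are also small imprecisions to clean up — in the equality analysis you assume $G-v=K_2\vee E_{n-3}$, but in the bipartite branch the putative extremal graph is $K_{2,n-3}$ and needs a separate check — but the bipartite bound is the real missing step.) To close the gap you should either cite the Engbers--Galvin result directly, or prove the induction for the larger class of all $2$-connected graphs (dropping the chromatic restriction), which amounts to re-deriving the $\delta\geq 2$ case of that theorem.
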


The results for maximizing $i(G)$ amongst $3$-chromatic $2$-connected graphs are consequences of \cite{HuaZhang}, and we briefly discuss this at the beginning of Section \ref{sec-2con3chrom}.  Also, as a fairly routine consequence of results for independent sets of size $t$ in $n$-vertex $\ell$-connected graphs, we show the following. 

\begin{theorem}\label{thm-bigt}
Let $k \geq 3$, $\ell \geq k$, and $n \geq 2\ell$ be fixed.  If $G$ is an $n$-vertex $k$-chromatic $\ell$-connected graph and $t \geq \ell$, then
\[
i_t(G) \leq i_t(G^*).
\]
\end{theorem}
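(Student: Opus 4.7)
The plan is to compute $i_t(G^*)$ explicitly and then to invoke a known extremal result for $i_t$ in graphs with minimum degree at least $\ell$.

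Since $\ell \geq k$, the graph $G^*$ takes the form $(K_{k-1}\cup E_{\ell-k+1})\vee E_{n-\ell}$. Write $A$ for the ``left'' side (the $\ell$ vertices inducing $K_{k-1}\cup E_{\ell-k+1}$) and $B$ for the ``right'' side (the independent set of $n-\ell$ vertices). Because every vertex of $A$ is adjacent to every vertex of $B$, any independent set of $G^*$ lies entirely inside $A$ or entirely inside $B$. The independence number of $A$ is $(\ell-k+1)+1=\ell-k+2$; since $k\geq 3$ and $t\geq\ell$, this gives $\alpha(A)\leq \ell-1 < t$, so no size-$t$ independent set of $G^*$ lies in $A$. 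Hence
\[
i_t(G^*)=\binom{n-\ell}{t}.
\]

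Next I would cite the level-wise version of Galvin's minimum-degree bound: for $n\geq 2\ell$, every $n$-vertex graph $H$ with $\delta(H)\geq \ell$ satisfies
\[
i_t(H)\leq i_t(K_{\ell,n-\ell})=\binom{\ell}{t}+\binom{n-\ell}{t},
\]
with equality only when $H=K_{\ell,n-\ell}$. Since an $\ell$-connected graph has minimum degree at least $\ell$, this applies to $G$.

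I then finish by splitting on $t$. For $t>\ell$, $\binom{\ell}{t}=0$, so the bound immediately yields $i_t(G)\leq \binom{n-\ell}{t}=i_t(G^*)$. For $t=\ell$, the bound gives $i_\ell(G)\leq 1+\binom{n-\ell}{\ell}$; but $K_{\ell,n-\ell}$ is bipartite while $G$ is $k$-chromatic with $k\geq 3$, so $G\neq K_{\ell,n-\ell}$ and the inequality is strict, whence $i_\ell(G)\leq \binom{n-\ell}{\ell}=i_\ell(G^*)$ by integrality.

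The only real obstacle is pinning down the reference for the level-wise minimum-degree bound in precisely the form above; a version sufficient for $t\geq\ell$ appears in (or follows routinely from) work of Alexander--Cutler--Mink, Law--McDiarmid, and Gan--Loh--Sudakov. Once that statement is in hand, the argument above is essentially an evaluation of $i_t(G^*)$ plus a one-line chromatic-number observation to exclude the bipartite equality case.
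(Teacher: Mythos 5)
Your proposal is correct and follows essentially the same route as the paper: both apply the Gan--Loh--Sudakov minimum-degree bound (Theorem~\ref{thm-GLS}), observe that $i_t(G^*)=i_t(K_{\ell,n-\ell})$ for $t>\ell$ and $i_\ell(G^*)=i_\ell(K_{\ell,n-\ell})-1$, and dispose of the $t=\ell$ case via the uniqueness of $K_{\ell,n-\ell}$ as the extremal graph plus the fact that a $k$-chromatic graph with $k\geq 3$ cannot be bipartite. Your version simply makes the evaluation of $i_t(G^*)$ and the integrality step slightly more explicit.
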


\medskip

Finally, we also consider the problem of maximizing the number of independent sets of size $t=2$ in $k$-chromatic $\ell$-connected graphs. Note that an independent set of size 2 induces an edge in the complement of the graph, so this problem is equivalent to minimizing the number of edges. 

The problem of minimizing edges has been studied for several related families of graphs. The minimum number of edges in a $k$-chromatic graph is clearly ${k \choose 2}$. The minimum number of edges in $\ell$-connected graphs is $\lceil \frac{n\ell}{2} \rceil$ due to Harary \cite{Harary}. The minimum number of edges in $k$-critical graphs was first studied by Dirac \cite{Dirac} and Gallai \cite{Gallai-I, Gallai} and subsequently in \cite{Kostochka,Krivelevich1,Krivelevich2}. Minimizing edges in $k$-chromatic $\ell$-edge-connected graphs was considered in \cite{Westetal}, where it was briefly noted that some of the bounds also hold for $\ell$-connected graphs. 

We present two sharp bounds for the minimum number of edges in $k$-chromatic $\ell$-connected graphs for the case that $k-1>\ell >1$. The first has the extra condition that $\ell \le  n-k$ and our bound coincides exactly with the result in \cite{Westetal}, however, their proof relies on edge-connectivity. Furthermore, our techniques in the range $\ell \le n-k$ allow us to tackle the range $\ell > n-k$ as well; this appears as an unsolved case in \cite{Westetal}. All remaining cases for minimizing edges in $k$-chromatic $\ell$-connected graphs follow similarly to \cite{Westetal}, so we omit the results here.

\begin{theorem}\label{thm-minedges1}
If $G$ is a $k$-chromatic $\ell$-connected graph with $k-1>\ell>1$ and $\ell\leq n-k$ then
\[\abs{E(G)}\geq \binom{k}{2}+\frac{(n-k+1)\ell}{2}\]
and this bound is sharp.
\end{theorem}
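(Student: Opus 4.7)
The plan is to exploit the existence of a $k$-critical subgraph $H \subseteq G$ (which exists since $G$ is $k$-chromatic) and bound edges via degree and edge-connectivity. Set $k' := \abs{V(H)} \geq k$, $S := V(H)$, and $T := V(G) \setminus S$. By $k$-criticality every vertex of $S$ has degree at least $k-1$ in $G$, while $\ell$-connectivity forces every vertex of $T$ to have degree at least $\ell$; moreover, since $\ell$-vertex-connectivity implies $\ell$-edge-connectivity, whenever $T \neq \emptyset$ the edge cut satisfies $e(S,T) \geq \ell$.

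In the main case $T \neq \emptyset$, writing $\abs{E(G)} = e(S) + e(T) + e(S,T)$ and applying the three bounds $2e(S) \geq (k-1)k'$, $2e(T) + e(S,T) \geq \ell(n-k')$, and $e(S,T) \geq \ell$ would yield
\[
\abs{E(G)} \;\geq\; \frac{(k-1)k' + \ell(n-k')}{2} + \frac{e(S,T)}{2} \;\geq\; \frac{(k-1-\ell)k' + \ell(n+1)}{2}.
\]
The hypothesis $k - 1 > \ell$ gives $k - 1 - \ell \geq 1 > 0$, so the right-hand side is strictly increasing in $k'$ and thus minimized at the smallest value $k' = k$, yielding the required $\abs{E(G)} \geq \binom{k}{2} + \frac{\ell(n-k+1)}{2}$.

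The boundary case $T = \emptyset$ (so $G$ itself is $k$-critical) needs a separate argument: here I would apply the minimum-degree bound $\abs{E(G)} \geq (k-1)n/2$ and observe that the desired inequality reduces to $(k-1-\ell)(n-k) \geq \ell$, which follows at once from $k - 1 - \ell \geq 1$ and the hypothesis $\ell \leq n-k$.

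For sharpness, I would construct an extremal $G^*$ from a $K_k$ together with $n-k$ further vertices, arranging their induced subgraph and attaching edges so that each of these vertices has degree exactly $\ell$, exactly $\ell$ edges cross between $V(K_k)$ and the additional vertices, and these $\ell$ crossing edges are incident to $\ell$ distinct vertices of $K_k$, so that $G^*$ remains $\ell$-connected; the $\ell = 2$ case is simply $K_k$ together with a path on the new vertices attached at its two endpoints to two distinct vertices of $K_k$. The main obstacle in the argument is the explicit sharpness construction for general $\ell$: one must confirm that the chosen structure is genuinely $\ell$-connected (not merely satisfying the degree and edge-cut equalities at the extremum), which typically requires a careful Harary-style construction together with some attention to the parity of $\ell(n-k+1)$.
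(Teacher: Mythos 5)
Your lower-bound argument matches the paper's own proof: both decompose $\abs{E(G)}=e(S)+e(T)+e(S,T)$ around a $k$-critical subgraph $H$, apply the same three bounds (minimum degree $k-1$ in $H$, minimum degree $\ell$ in $T$, and $e(S,T)\ge\ell$ from Menger), and minimize a linear function of $\abs{V(H)}$ at $\abs{V(H)}=k$; your case split on ``$T$ empty or not'' is equivalent to the paper's ``$G$ $k$-critical or not.'' One wording issue is worth fixing: to get $2e(S)\ge(k-1)k'$ you need the vertices of $S$ to have degree at least $k-1$ \emph{within} $G[S]$ (which holds because $H\subseteq G[S]$ and every $k$-critical graph has minimum degree $k-1$), not merely degree at least $k-1$ in $G$ as you wrote. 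The latter only yields $2e(S)+e(S,T)\ge(k-1)k'$, which, when substituted into your chain, absorbs and cancels the $e(S,T)/2\ge\ell/2$ slack that is precisely what pushes the constant from $\binom{k}{2}+\tfrac{(n-k)\ell}{2}$ up to the claimed $\binom{k}{2}+\tfrac{(n-k+1)\ell}{2}$.

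The genuine gap, which you yourself flag, is the sharpness construction for general $\ell$. The paper's construction is as follows: take $K_k$ disjoint from the Harary graph $H_{n-k,\ell}$; choose $\ell$ vertices $v_1,\dots,v_\ell$ in $K_k$ and $\ell$ cyclically consecutive vertices $w_1,\dots,w_\ell$ of $H_{n-k,\ell}$; add the matching $v_iw_i$; and then delete every other edge of the path $w_1w_2\cdots w_\ell$ starting with $w_1w_2$ (with a minor adjustment when both $\ell$ and $n-k$ are odd, to handle the one vertex of higher degree in the Harary graph). This leaves exactly $\binom{k}{2}+\bigl\lceil\tfrac{(n-k+1)\ell}{2}\bigr\rceil$ edges, and $\ell$-connectivity is checked by exhibiting $\ell$ internally disjoint paths between every pair of vertices, rerouting around each deleted Harary edge $w_iw_{i+1}$ through $K_k$ via $w_i,v_i,v_{i+1},w_{i+1}$. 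Your $\ell=2$ example (a $K_k$ with a path attached at two distinct clique vertices) is exactly the simplest instance of this, but the verification that the general construction stays $\ell$-connected after deleting the path edges is the nontrivial part you would still need to supply.
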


\begin{theorem} \label{thm-minedges2}
If $G$ is a $k$-chromatic $\ell$-connected graph with $k-1>\ell>1$ and $\ell>n-k$ then
\[
|E(G)| \geq \binom{k}{2} + \binom{n-k}{2} + (n-k)(\ell-(n-k-1))
\]
and this bound is sharp.
\end{theorem}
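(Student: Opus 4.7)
The plan is to exhibit a graph achieving the claimed bound and then to establish the matching lower bound via a case analysis based on a minimum-size $k$-critical subgraph of $G$.

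For sharpness, we construct $G^*$ by taking a clique $K_k$ on a vertex set $U$, a disjoint clique $K_{n-k}$ on a vertex set $V$, and attaching each $v \in V$ to exactly $\ell - (n-k-1)$ vertices of $U$; this quantity is at least $2$ since $\ell > n-k$. Spreading the $(n-k)(\ell - (n-k-1))$ cross-edges so that every $u \in U$ is incident to at least one $v \in V$ ensures $G^*$ is $\ell$-connected, since every vertex cut of size less than $\ell$ leaves some vertex of $U$ that links the remainder of $U$ (via edges of $K_k$) to $V$ (via a surviving cross-edge). Since $K_k \subseteq G^*$ we have $\chi(G^*) \geq k$, and a proper $k$-coloring of $K_k$ extends to $V$ because each $v \in V$ has fewer than $k$ colored neighbors. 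A direct count confirms that $|E(G^*)| = \binom{k}{2} + \binom{n-k}{2} + (n-k)(\ell - (n-k-1))$ equals the claimed value.

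For the lower bound, let $H \subseteq G$ be a $k$-critical subgraph with $h := |V(H)|$ minimum. Since $k \geq \ell + 2 \geq 4$, there is no $k$-critical graph on $k+1$ vertices, so either $h = k$ or $h \geq k+2$. If $h = k$, then $H = K_k$; writing $R := V(G) \setminus V(H)$, the minimum-degree condition implies each $v \in R$ has at least $\ell - \deg_R(v)$ neighbors in $V(H)$, and summing over $R$ gives
\[
|E(G)| \geq \binom{k}{2} + |E(R)| + (n-k)\ell - 2|E(R)| = \binom{k}{2} + (n-k)\ell - |E(R)|,
\]
which combined with $|E(R)| \leq \binom{n-k}{2}$ yields the claimed inequality.

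When $h \geq k+2$, we apply the Kostochka--Yancey lower bound $|E(H)| \geq \lceil ((k+1)(k-2)h - k(k-3))/(2(k-1)) \rceil$ along with the extras argument applied to $R = V(G) \setminus V(H)$ to obtain $|E(G)| \geq |E(H)| + (n-h)\ell - \binom{n-h}{2}$. Rearranging, the inequality $|E(G)| \geq T := \binom{k}{2} + (n-k)\ell - \binom{n-k}{2}$ reduces to the algebraic statement $(k-1)(2n - h - 2\ell) \geq k + 1$, which holds whenever $n \geq 2\ell + 2$. The remaining configurations with $n \leq 2\ell + 1$ and $h = n$ (so $G$ is itself $k$-critical and the Kostochka--Yancey bound need not be tight) form the main obstacle. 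For these we argue directly by analyzing a proper $k$-coloring of $G$: the $s \geq 2k - n$ singleton color classes form a clique, every singleton has degree at least $k-1$ (as it has a neighbor in each other color class), and every non-singleton vertex has degree at least $k-1$ by $k$-criticality; a careful accounting of cross-class and intra-singleton edges then produces enough edges to meet $T$, paralleling the edge-connected analogue in \cite{Westetal}.
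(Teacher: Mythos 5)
Your overall skeleton — exhibit an extremal graph, then pass to a minimum $k$-critical subgraph $H$ and add up clique, intra-$R$, and cross edges — matches the paper's. However, there is a substantive gap in the lower-bound argument stemming from your choice of bound for $k$-critical graphs.

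The hypotheses $k-1 > \ell > n-k$ force $k \leq h \leq n < 2k-1$, so the critical subgraph $H$ always lives in the ``small'' regime $h \leq 2k-2$. In that range Gallai's bound, $|E(H)| \geq \tfrac{1}{2}(h(k-1) + (h-k)(2k-h) - 2)$, is strictly stronger than Kostochka--Yancey (they agree only at $h = 2k-1$, and Gallai exceeds KY by roughly $k-4$ at $h = k+2$). Carrying your own calculation through with Gallai instead gives
\[
\Bigl(|E(G)| \text{ lower bound}\Bigr) - T \;\geq\; (h-k)\bigl(k - h + n - 1 - \ell\bigr) - 1,
\]
which is a concave quadratic in $h$; at $h=k+2$ it equals $2n - 2\ell - 7 \geq 1$ (since $n \geq k+2 \geq \ell+4$) and at $h=n$ it equals $(n-k)(k-1-\ell) - 1 \geq 1$, so it is positive throughout. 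With KY you instead arrive at $(k-1)(2n - h - 2\ell) \geq k+1$, which genuinely fails — e.g.\ $\ell = 3$, $k = 5$, $n = 7$, $h = 7$ gives $(k-1)(n - 2\ell) = 4 < 6 = k+1$. Moreover, your claim that the only problematic configurations are $h = n$ is too optimistic: the inequality $(k-1)(2n-h-2\ell) \geq k+1$ can also fail for $k+2 \leq h < n$ once $n \leq 2\ell$ (say $n = 2\ell$, $h = n-1$). Finally, the proposed rescue by a ``careful accounting of cross-class and intra-singleton edges'' is only a sketch, not a proof, and cannot be taken as complete. The fix is simply to swap in Gallai's bound (as the paper does), which dissolves the entire problem without any residual cases.

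A smaller point on sharpness: it is not generally possible to spread the $(n-k)(\ell - (n-k-1))$ cross-edges so that \emph{every} vertex of $U$ receives one — for instance when $n-k = 1$ there are only $\ell < k$ cross-edges. That requirement is also unnecessary: $\ell$-connectivity is already guaranteed if the cross-edges are distributed so that each $w_i$ has degree exactly $\ell$ and only $\ell$ designated vertices of $U$ receive cross-edges (removing fewer than $\ell$ vertices either leaves a $w_i$ with a surviving neighbor in the still-connected remnant of $K_k$, or removes $w_i$ entirely), which is what the paper's explicit interval construction arranges.
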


In the rest of the paper we present the proofs of our results. We prove Theorem \ref{thm-asympresult} in Section \ref{sec-asympresult}, and we consider 2-connected, 3-chromatic graphs in Section \ref{sec-2con3chrom} where we also prove Theorem \ref{thm-bigt}. Then we consider maximizing independent sets of size $t=2$ in Section \ref{sec:t=2}. Finally, in Section \ref{sec-conclusion}, we highlight some open questions related to the results in this paper.

\section{Proof of Theorem \ref{thm-asympresult}} \label{sec-asympresult}

In this section we will prove Theorem \ref{thm-asympresult}, and to do so we will use the following results from \cite{EngbersErey}.

\begin{theorem}[\cite{EngbersErey}]\label{thm-EngbersEreyIndSets}
Let $G$ be an $n$-vertex $k$-chromatic graph.  Then
\[
i(G) \leq i(K_{k} \cup E_{n-k}) = (k+1)2^{n-k}
\]
with equality if and only if $G = K_k \cup E_{n-k}$.
\end{theorem}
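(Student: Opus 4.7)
My plan is to prove Theorem~\ref{thm-EngbersEreyIndSets} by strong induction on $n$, using that every $k$-chromatic graph contains a $k$-critical subgraph $H$ (which is necessarily connected). The base case $n = k$ is immediate since $K_k$ is the unique $k$-chromatic graph on $k$ vertices. For the inductive step I would split into three structural cases depending on the relationship between $H$ and $G$.

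If $G$ is disconnected, write $G = G_1 \cup G_2$ where $G_1$ is the component containing $H$, so $\chi(G_1) = k$. The inductive hypothesis applied to $G_1$ together with the trivial bound $i(G_2) \leq 2^{|V(G_2)|}$ gives the inequality, and the inductive equality case---together with the fact that $G_1$ is a single connected component, so it cannot equal a disconnected $K_k \cup E_{|V(G_1)| - k}$ unless $|V(G_1)| = k$---forces $G_1 = K_k$ and $G_2 = E_{n-k}$. If $G$ is connected with $|V(H)| < n$, let $A = V(H)$; since any independent set of $G$ restricted to $A$ is independent in $G[A]$, one has $i(G) \leq i(G[A]) \cdot 2^{n - |A|}$. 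Applying the inductive hypothesis to the $k$-chromatic graph $G[A]$ on fewer than $n$ vertices gives the bound, and connectedness of $G$ yields strict inequality since equality would force no edges between $A$ and its complement. If $G$ is connected with $V(H) = V(G)$ but $H \subsetneq G$, then deleting any edge in $E(G) \setminus E(H)$ strictly increases $i$ (the endpoints form a new size-$2$ independent set), so $i(G) < i(H)$; this reduces to the subcase where $G$ itself is $k$-critical.

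In this key subcase---$G$ connected and $k$-critical with $n > k$---pick any vertex $v$ and write $i(G) = i(G - v) + i(G - N[v])$. Because $G$ is $k$-critical, $\chi(G - v) = k - 1$ exactly (otherwise a $(k-2)$-coloring of $G - v$ would extend to a $(k-1)$-coloring of $G$, contradicting $\chi(G) = k$), and $\delta(G) \geq k - 1$. The inductive hypothesis applied to the $(k-1)$-chromatic graph $G - v$ on $n - 1$ vertices yields $i(G - v) \leq k \cdot 2^{n - k}$, while $i(G - N[v]) \leq 2^{n - 1 - d(v)} \leq 2^{n - k}$; summing gives $(k+1) 2^{n - k}$.

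The main obstacle is proving strict inequality in this last subcase, which will simultaneously pin down the unique extremal graph. Equality in the two bounds would force $G - v = K_{k-1} \cup E_{n - k}$ (by the inductive equality case) and $d(v) = k - 1$. If $v$ has a neighbor in the isolated part $E_{n-k}$, then $v$ has at most $k - 2$ neighbors in $K_{k-1}$, and one can properly $(k-1)$-color $G$ by assigning the $k - 1$ vertices of $K_{k-1}$ distinct colors, giving $v$ the color of its non-neighbor in $K_{k-1}$, and coloring the pendants freely, contradicting $\chi(G) = k$. If instead all $k - 1$ neighbors of $v$ lie in $K_{k-1}$, then $G = K_k \cup E_{n-k}$, which is disconnected for $n > k$, contradicting that $G$ is connected. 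Hence the inequality is strict in this subcase, and tracing through all three cases identifies $K_k \cup E_{n-k}$ as the unique extremal graph.
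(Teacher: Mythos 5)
The paper cites this theorem from \cite{EngbersErey} without reproducing its proof, so there is no in-text argument to compare against; I evaluate your argument on its own. Your proof is correct. The reduction---pick a $k$-critical subgraph $H$, dispatch $G$ disconnected by multiplicativity of $i$ over components, dispatch $|V(H)|<n$ via $i(G)\le i(G[V(H)])\cdot 2^{\,n-|V(H)|}$, dispatch $V(H)=V(G)$ with $H\subsetneq G$ via $i(G)<i(H)$, and then handle the remaining case by the recursion $i(G)=i(G-v)+i(G-N[v])$ together with $\chi(G-v)=k-1$ and $\delta(G)\ge k-1$---is exhaustive and each step is sound, with the uniqueness statement correctly threaded through each case. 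Two cosmetic remarks. First, in the equality analysis of the final subcase there is a one-line shortcut: if $G-v=K_{k-1}\cup E_{n-k}$ with $n>k$, the vertices of the $E_{n-k}$ part have degree at most $1$ in $G$ (they meet only $v$), contradicting $\delta(G)\ge k-1\ge 2$; this disposes of both your subcases simultaneously without the explicit recoloring. Second, ``coloring the pendants freely'' tacitly uses $k-1\ge 2$; this is legitimate because a connected $k$-critical graph on $n>k$ vertices forces $k\ge 3$, but it deserves a word of justification.
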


\begin{theorem}[\cite{EngbersErey}]\label{thm-EngbersEreyComponents}
Let $G$ be an $n$-vertex $k$-chromatic graph with $d$ components.  Then
\[
i(G) \leq k2^{n-k} + 2^{d-1}
\]
with equality if and only if $G = (K_1 \vee (K_{k-1} \cup E_{n-k-d+1}))\cup E_{d-1}$.
\end{theorem}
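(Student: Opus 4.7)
The plan is to induct on $n$, reducing to smaller instances when possible. In the base case $n = k + d - 1$, the $k$-chromatic component of $G$ must have exactly $k$ vertices and thus equal $K_k$, and the remaining $d - 1$ components are forced to be singletons, so $G = K_k \cup E_{d-1}$ and $i(G) = (k+1)2^{d-1} = k \cdot 2^{n-k} + 2^{d-1}$, matching the bound with uniqueness. For $n > k + d - 1$ and $d \geq 2$, if $G$ has an isolated vertex $v$, then $G - v$ is $k$-chromatic on $n - 1$ vertices with $d - 1$ components; the inductive hypothesis gives $i(G - v) \leq k \cdot 2^{n - 1 - k} + 2^{d-2}$, and $i(G) = 2\, i(G - v)$ yields the bound, with extremality transferring through since attaching an isolated vertex to the extremal graph for $(n-1,k,d-1)$ produces the extremal graph for $(n,k,d)$.

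The harder case is when $G$ has no isolated vertex, so every component has $n_j \geq 2$. Decomposing $i(G) = \prod_j i(C_j)$ with $C_0$ a $k$-chromatic component on $n_0$ vertices, I would combine the connected $k$-chromatic bound $i(C_0) \leq k \cdot 2^{n_0 - k} + 1$ (the $d = 1$ specialization, addressed below) with the star bound $i(C_j) \leq 2^{n_j - 1} + 1$ for each connected $C_j$ on $n_j \geq 2$ vertices, together with the readily verified partition maximum $\prod_{j \geq 1}(2^{n_j - 1} + 1) \leq 2^{n - n_0 - 1} + 2^{d - 2}$ (over partitions of $n - n_0$ into $d - 1$ positive parts, maximized by $d - 2$ singletons and one part of size $n - n_0 - d + 2$). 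A short algebraic rearrangement then gives
\[
(k \cdot 2^{n_0 - k} + 1)(2^{n - n_0 - 1} + 2^{d-2}) = k \cdot 2^{n-k} + 2^{d-1} - \bigl(2^{n - n_0 - d + 1} - 1\bigr)\, 2^{d-2}\bigl(k \cdot 2^{n_0 - k} - 1\bigr),
\]
and since every component has at least two vertices we have $n_0 \leq n - 2(d-1) < n - d + 1$ for $d \geq 2$, so the correction term is strictly positive and $i(G) < k \cdot 2^{n-k} + 2^{d-1}$.

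The $d = 1$ case---that any connected $k$-chromatic $G$ satisfies $i(G) \leq k \cdot 2^{n-k} + 1$ with equality iff $G = K_1 \vee (K_{k-1} \cup E_{n-k})$---must be handled separately, since isolated-vertex reduction is unavailable, and this is the main obstacle. I would prove it by a second induction on $n$, selecting a vertex $v$ in a $k$-critical subgraph of $G$ (so $\deg_G(v) \geq k - 1$) and applying the standard recurrence $i(G) = i(G - v) + i(G - N[v])$. A careful case analysis on the chromatic number, connectivity, and component count of $G - v$ and $G - N[v]$, invoking Theorem \ref{thm-EngbersEreyIndSets} when either drops to $(k-1)$-chromatic and the inductive hypothesis otherwise, should recover the tight bound and pin down the unique extremal structure $K_1 \vee (K_{k-1} \cup E_{n-k})$. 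This auxiliary connected lemma is the technical core of the whole argument.
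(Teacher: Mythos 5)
First, a point of reference: this theorem is quoted from \cite{EngbersErey} and the paper contains no proof of it, so there is no internal argument to compare yours against; I can only assess the proposal on its own terms.

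Your reduction of the general case to the connected ($d=1$) case is correct and checkable. The base case $n=k+d-1$, the isolated-vertex induction, and the no-isolated-vertex computation all work: I verified the rearrangement
\[
(k 2^{n_0-k}+1)(2^{n-n_0-1}+2^{d-2}) \;=\; k2^{n-k}+2^{d-1}-\bigl(2^{n-n_0-d+1}-1\bigr)2^{d-2}\bigl(k2^{n_0-k}-1\bigr),
\]
and since $n_0\le n-2(d-1)<n-d+1$ when $d\ge 2$ and $n_0\ge k$, the correction term is indeed strictly positive, which is also consistent with the extremal graph always having $d-1$ isolated vertices. The star bound for connected graphs and the partition maximum you invoke are standard and correct.

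The genuine gap is the $d=1$ lemma, which you rightly call the technical core and then only gesture at. The strategy of taking $v$ in a $k$-critical subgraph (so $\deg_G(v)\ge k-1$) and writing $i(G)=i(G-v)+i(G-N[v])$ does not close as stated in the case $\chi(G-v)=k-1$: there Theorem \ref{thm-EngbersEreyIndSets} only gives $i(G-v)\le k2^{(n-1)-(k-1)}=k2^{n-k}$, which together with the trivial $i(G-N[v])\ge 1$ already meets the target $k2^{n-k}+1$, and $i(G-N[v])$ can a priori be as large as $2^{n-k}$. To proceed one needs a stability statement --- either $G-v$ is exactly $K_{k-1}\cup E_{n-k}$, which then heavily constrains $G$ and must be analyzed directly, or $i(G-v)$ falls short of $k2^{n-k}$ by a quantified margin, which requires knowing the second-largest value of $i$ over $(k-1)$-chromatic graphs on $n-1$ vertices. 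Neither ingredient is supplied, and the equality characterization for connected $G$ lives entirely inside this missing analysis. As written, the proposal is a correct and clean reduction to an auxiliary lemma, not a proof of the theorem.
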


We now move on to the proof.

\begin{proof}[Proof of Theorem \ref{thm-asympresult}]

Suppose that $G$ is an $n$-vertex $k$-chromatic graph with minimum degree at least $\ell$ which satisfies $i(G) \geq i(G^*)$.  We investigate the structure of the graph $G$. First note that $i(G^*) > 2^{n-k-\ell}$ holds for each $k$ and $\ell$.

\medskip

\textbf{Step 1:} {\em Show that $G$ cannot contain a large matching.}

Consider a maximum matching $M$ in $G$, and let $|M|$ denote the size of this maximum matching.  In any independent set, at most one endpoint of each edge in $M$ is in the independent set, giving three possibilities across each edge in $M$. Therefore, by only considering the restrictions on the edges in $M$, we have 
\[
i(G) \leq 3^{|M|} 2^{n-2|M|} = \left( \frac{3}{4} \right)^{|M|} 2^n.
\]
%by only considering the restrictions on the edges in $M$ (this may count some vertex subsets that are not independent, which is why this is an upper bound).

If $|M| > 3k+3\ell$, then $\left( \frac{3}{4}\right)^{|M|} < \left(\frac{3}{4}\right)^{3k+3\ell} =\left( \frac{27}{64}\right)^{k+\ell}< \left(\frac{1}{2}\right)^{k+\ell}$ and so $i(G) < 2^{n-k-\ell}$, which contradicts the assumption that $i(G) \geq i(G^*)$.  Therefore, we know that the maximum size of a matching $M$ in $G$ at most $3k+3\ell$.

\medskip

\textbf{Step 2:} {\em Show that there is a constant $c = c(k,\ell)$ so that $G$ contains $K_{\ell,cn}$ as a subgraph.}

Let $M$ be a maximum matching.  By Step 1, there are $2|M| \leq 6(k+\ell)$ vertices that are endpoints in $M$; call this set of vertices $J$.  The set $I=V(G) \setminus J$ has size $|I| \geq n-6(k+\ell)$ and, by maximality of $M$, must form an independent set.  Since $G$ has minimum degree at least $\ell$, each vertex in $I$ must have at least $\ell$ neighbors in $J$.  %So each vertex in $I$ can be assigned a subset of size $\ell$ from $J$ (if there are more than $\ell$ neighbors, select the first $\ell$ in some fixed ordering of $J$).  
The pigeonhole principle then produces some set $L$ of size $\ell$, with $L \subseteq J$, having at least $(n-|J|) / \binom{|J|}{\ell} \geq cn$ common neighbors in $I$ for some constant $c = c(k,\ell)>0$.  Using that $n-|J| \geq n/2$ when $n \geq 12(k+\ell)$ and $\binom{|J|}{\ell} \leq \binom{6(k+\ell)}{\ell}$, we see that we can take $c = 1/(2\binom{6(k+\ell)}{\ell})$. %(here, I'm thinking $n-|J| \geq n/2$ since we want $n$ to be large anyways, and $\binom{|J|}{\ell} \leq |J|^{\ell} \leq (6(k+\ell))^\ell$, so something like $c=\frac{1}{2\cdot (6(k+\ell))^\ell}$ should do?).  
This shows that $G$ contains a (not necessarily induced) subgraph $K_{\ell,cn}$.

\medskip

\textbf{Step 3:} {\em Estimate the number of independent sets in $G$ that include a vertex from $L$.} % that in $K_{\ell,cn}$ the independent sets containing a vertex from $L$ (the side with size $\ell$) are inconsequential:

%Suppose that at least one of the vertices in $L$ is in the independent set.  
There are at most $2^{\ell}$ ways to include at least one vertex from $L$. % (the exact count is $2^{\ell}-1$).  
Then none of the at least $cn$ common neighbors of $L$ can be in the independent set, %so allowing for any of the other vertices to be in the independent set 
so this gives an upper bound of  
\begin{equation}\label{eqn-vertexinL}
2^{\ell} \cdot 2^{n-cn-\ell} = \left( \frac{1}{2} \right)^{cn} 2^n
\end{equation}
independent sets that contain some vertex from $L$.  %Since $c>0$, for large enough $n$ this is SMALL relative to $2^{n-k}$ or $2^{n-\ell}$, as $k$ and $\ell$ are fixed. To find this value, it is enough to have $(1/2)^{cn} < (1/2)^{k+\ell}$, or $cn > k+\ell$, or $n > \frac{1}{c}(k+\ell)$ (recall $c$ depends on $k$ and $\ell$). But bigger $n$ than this exponentially dampen this term, so we may want $n$ to be even bigger!

\medskip

\textbf{Step 4:} {\em Find an upper bound on the number of independent sets in $G$.  }

We have a bound from those independent sets that contain a vertex from $L$ above in \eqref{eqn-vertexinL}.  Those that do not contain a vertex from $L$ correspond to the independent sets in $G'$, the graph obtained by deleting $L$.  Note that $|V(G')| = n-\ell$. The chromatic number of $G'$ must be at most $k$, and so if the chromatic number is $m \leq k$ then by Theorem \ref{thm-EngbersEreyIndSets} we have at most $(m+1)2^{n-\ell-m}$ independent sets of this type, with equality if and only if $G - L$ is a complete graph on $m$ vertices with $n-\ell-m$ isolated vertices. To compare these maximal values for various $m$, note that for $m>1$ we have
\begin{equation}\label{eqn-compare}
(m+1)2^{n-\ell-m} = \frac{m+1}{2}2^{n-\ell-m+1} < m2^{n-\ell-m+1} = ((m-1)+1) 2^{n-\ell-(m-1)}.
\end{equation}
%so this is maximized by the smallest $m$ we can have. Intuitively, the deletion of the $\ell$ vertices should lower the chromatic number as much as possible. 

We now look at the two cases depending on the values of $k$ and $\ell$.

\bigskip

\textbf{Case 1 ($k > \ell$):} Suppose that $k>\ell$. We first argue that the chromatic number of $G'$ must be $k-\ell$.  If not, then by the bound from \eqref{eqn-compare} the graph $G'$ has chromatic number at least $m=k-\ell+1$, and so at most $(k-\ell+2)2^{n-k-1}$ independent sets of this type.  Combining this with \eqref{eqn-vertexinL} gives
\[
i(G) \leq (k-\ell+2)2^{n-k-1} + \left( \frac{1}{2} \right)^{cn} 2^n = \left( \frac{k-\ell+2}{2} +  \left( \frac{1}{2} \right)^{cn}2^k \right) 2^{n-k}.
\]
For $n>(k+1)/c$ (recalling also that $\ell \geq 1$), we have $i(G) < (k-\ell+1)2^{n-k}+\ell$, which is a contradiction.  % as the inconsequential term doesn't make up for the loss in the dominant term. (Take $(1/2)^{cn}$ to be at most $2^{-k-\ell}$...that might do it...this feels like a calculus problem to solve for what $n$ needs to be here.)

We now know that the chromatic number of $G'$ is $k-\ell$, and we next aim to show that $G'$ has many components. By Theorem \ref{thm-EngbersEreyComponents} an ($n-\ell$)-vertex $k$-chromatic graph with exactly $d$ components has at most $k2^{(n-\ell)-k}+2^{d-1}$ independent sets; note that this is an increasing function of $d$. % (notice that the larger the $c$, the larger the bound).  
Therefore, if $G'$ has at most $n-k$ components, we have
\[
i(G) \leq (k-\ell) 2^{n-\ell - (k-\ell)} + 2^{n-k-1}+\left( \frac{1}{2} \right)^{cn} 2^n = (k-\ell)2^{n-k} + \left( \frac{1}{2} + \left( \frac{1}{2} \right)^{cn}2^k \right) 2^{n-k}.
\]
For $n > (k+1)/c$, we have $i(G) < (k-\ell+1)2^{n-k}+\ell$, which is a contradiction.

Therefore, we know:
\begin{itemize}
	\item the chromatic number of $G'$ is $k-\ell$;
    \item $G'$ has $n-\ell$ vertices; and
    \item $G'$ has at least $n-k+1$ components.
\end{itemize}
This is only possible if $G'$ has exactly $n-\ell - (k-\ell)+1 = n-k+1$ components, one component is $K_{k-\ell}$, and the rest are isolated vertices.  Therefore $G'$ must be the graph $K_{k-\ell} \cup E_{n-k}$.

Now, recall that $G$ has minimum degree at least $\ell$ and chromatic number $k$.  The minimum degree condition forces each vertex in $L$ to be adjacent to each isolated vertex in $G'$. 

If some vertex in $L$ is not adjacent to some vertex in the complete component of $G'$, then those two vertices can be assigned the same color in a proper coloring and this can easily be extended to a $(k-1)$-coloring of the vertices of $G$, which contradicts the assumption that $G$ is $k$-chromatic.  Therefore the vertices in $L$ must form a dominating set in $G$. Furthermore, they must all be adjacent, or similar argument shows that $G$ can be properly colored with at most $k-1$ colors.  Therefore $G$ must be the graph $G^*$.

\medskip

\textbf{Case 2 ($k \leq \ell$):} Suppose now that $k \leq \ell$. Recall that $G'$ is the graph obtained from $G$ by deleting $L$, and that in this case $i(G^*)= 2^{n-\ell} + k2^{\ell-k+1} -1$.  First suppose that $G'$ contains some edge $e$.  At most one of the endpoints of $e$ can be in an independent set, and so this combined with \eqref{eqn-vertexinL} gives
\[
i(G) \leq 3\cdot 2^{n-\ell-2} + \left( \frac{1}{2} \right)^{cn} 2^n = \left(\frac{3}{4} + \left( \frac{1}{2} \right)^{cn} 2^{\ell} \right) 2^{n-\ell}<2^{n-\ell}
\]
where the strict inequality holds for $n>(\ell+2)/c$. This is a contradiction to the assumption on $G$.

Therefore $G'$ must be the empty graph.  As $G$ has minimum degree $\ell$, this means that each vertex in $G'$ is adjacent to each vertex in $L$.  Since $G$ is $k$-chromatic, the induced graph on $L$ must be $(k-1)$-chromatic. %, which implies $|L| = \ell\geq k-1$. 
Since all edges are present between $G'$ and $L$, we have
\[
i(G) = i(L) + 2^{n-\ell}-1.
\]

Now, $L$ has $\ell$ vertices and chromatic number $k-1$, and so we know from Theorem \ref{thm-EngbersEreyIndSets} that it has at most $i(K_{k-1} \cup E_{\ell-k+1}) = k2^{\ell-k+1}$ independent sets, with equality if and only if $L=K_{k-1}\cup E_{\ell-k+1}$. Therefore
\[
i(G) = i(L) + 2^{n-\ell}-1 \leq k2^{\ell-k+1} + 2^{n-\ell} -1,
\]
with equality if and only if $L=K_{k-1}\cup E_{\ell-k+1}$, which implies that $G=G^*$.
\end{proof}

\section{$2$-connected $3$-chromatic and the Proof of Theorem \ref{thm-bigt}} \label{sec-2con3chrom}
In this section we first completely classify the $2$-connected $3$-chromatic graphs that maximize the total count of independent sets and the total count of independent sets of each (non-trivial) fixed size. 
\subsection{Total count of independent sets}
%We start with the following theorem, from which the total count mostly follows.

We first show that the result for the total number of independent sets is essentially a corollary to a result from \cite{HuaZhang}.  There it is proved that if $G$ is a $2$-connected graph with $n \geq 4$, then $i(G) \leq 2^{n-2}+3$ with equality if and only if $G$ is $K_{2,n-2}$ or $C_5$.
%\begin{theorem}[\cite{HuaZhang}]\label{thm-HZ}
%Let $G$ be a $2$-connected graph with $n \geq 4$.  Then
%\[
%i(G) \leq 2^{n-2}+3
%\]
%with equality if and only if $G$ is $K_{2,n-2}$ or $C_5$.
%\end{theorem}
Since $C_5$ is $3$-chromatic and $K_{2,n-2}$ is not, it follows that if $n=5$ we have that $C_5$ is the $2$-connected $3$-chromatic graph with the most number of independent sets.  %Let $K_2 \vee E_{n-2}$ denote the graph which begins with $K_{2,n-2}$ and joins the vertices in the size two partition class; this is a $3$-chromatic graph.  It is easy to see that 
For $n \neq 5$, note that $K_2 \vee E_{n-1}$ is $3$-chromatic and
\[
i(K_{2,n-2}) = i(K_2 \vee E_{n-2})+1,
\]
so for $n\geq 4$ and $n \neq 5$ the characterization of equality implies that $K_2 \vee E_{n-2}$ is the (not necessarily unique) graph with the maximum number of independent sets.  

\subsection{Size $t$ independent sets}
Now we move to independent sets of size $t$.  
For $t \geq 3$, we have the following results for $2$-connected graphs and graphs with fixed minimum degree $\delta \geq 3$. 

\begin{theorem}[\cite{EG}]\label{thm-EG}
Let $n \geq 4$.  For every $t \geq 3$, every $n$-vertex graph $G$ with minimum degree at least $2$ satisfies
\[
i_t(G) \leq i_t(K_{2,n-2}).
\]
For $n \geq 5$ and $3 \leq t \leq n-2$ we have equality if and only if $G$ is $H \vee E_{n-2}$, where $H$ is any graph on two vertices.
\end{theorem}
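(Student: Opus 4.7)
The plan is to pass to the complement and recast the problem as a clique-counting question. Set $H := \bar G$; then the hypothesis $\delta(G) \ge 2$ becomes $\Delta(H) \le n-3$, and independent sets of size $t$ in $G$ correspond to copies of $K_t$ in $H$. Writing $k_t(H)$ for the number of $K_t$-subgraphs of $H$, and noting that $i_t(K_{2,n-2}) = \binom{n-2}{t}$ for $t\ge 3$ (since the two vertices of the small part cannot simultaneously contribute to an independent set of size $\ge 3$), the goal becomes: every $n$-vertex graph $H$ with $\Delta(H)\le n-3$ satisfies $k_t(H)\le \binom{n-2}{t}$, with equality (for $n\ge 5$ and $3\le t\le n-2$) precisely when $H$ is $K_{n-2}$ together with an arbitrary graph on the remaining two vertices.

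I would argue by induction on $n$, treating $t\ge 3$ as a parameter. The base case $n=4$ is immediate because $\Delta(H)\le 1$ forces $H$ to be a matching, so $k_t(H)=0=\binom{2}{t}$. For the inductive step, pick a vertex $v$ of maximum degree in $H$ and split
\[
k_t(H) \;=\; k_t(H-v) \;+\; k_{t-1}\!\bigl(H[N_H(v)]\bigr).
\]
The second term is at most $\binom{|N_H(v)|}{t-1}\le \binom{n-3}{t-1}$. If additionally $\Delta(H-v)\le (n-1)-3 = n-4$, then induction applied to $H-v$ gives $k_t(H-v)\le \binom{n-3}{t}$, and Pascal's identity delivers $k_t(H)\le \binom{n-3}{t}+\binom{n-3}{t-1}=\binom{n-2}{t}$, as desired.

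The main obstacle is the case where some $u\ne v$ also has $\deg_H(u)=n-3$ and $v\not\sim u$, so that $\Delta(H-v)=n-3$ and the inductive hypothesis does not apply verbatim to $H-v$. Here $v$ and $u$ each have exactly two non-neighbors in $H$, and those non-neighborhoods must overlap (each contains the other), so the common neighborhood $N_H(v)\cap N_H(u)$ has size at least $n-4$, and every vertex in it already uses $v$ and $u$ as two of its $\le n-3$ neighbors, leaving at most $n-5$ further neighbors inside the common neighborhood. This strong restriction limits the clique content of $H[N_H(v)\cap N_H(u)]$, and a direct split of $t$-cliques according to their intersection with $\{v,u\}$ yields the required bound by a short calculation. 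A cleaner alternative is to strengthen the inductive hypothesis from $\Delta(H)\le n-3$ to the weaker $\omega(H)\le n-2$ (which $\Delta(H)\le n-3$ still implies because a $K_s$-subgraph contributes degree $s-1$) and carry out a double induction on $n$ and $\omega$, which sidesteps the degree jump in $H-v$.

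For the equality case with $n\ge 5$ and $3\le t\le n-2$, I would track where each inequality is tight. The bound $k_{t-1}(H[N_H(v)])\le \binom{n-3}{t-1}$ is tight only when $H[N_H(v)]\cong K_{n-3}$, and tightness of $k_t(H-v)\le \binom{n-3}{t}$ recursively forces $H-v$ to contain a $K_{n-3}$ on the same vertex set. Splicing these together exhibits a copy of $K_{n-2}$ in $H$; the constraint $\Delta(H)\le n-3$ then forbids any edge between the remaining two vertices and this $K_{n-2}$, so $H = K_{n-2}\cup H'$ for some graph $H'$ on the two remaining vertices. Complementing back gives $G=\bar{H'}\vee E_{n-2}$ with $\bar{H'}$ an arbitrary graph on two vertices, matching the stated characterization. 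I expect the delicate step to be exactly the hard case described in the previous paragraph: controlling the interplay between two maximum-degree non-adjacent vertices is the one place where the simple double induction with Pascal does not close the argument, and where either an explicit combinatorial count or the $\omega$-based strengthening of the hypothesis is needed.
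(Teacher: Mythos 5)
First, a point of order: the paper does not prove Theorem~\ref{thm-EG} at all --- it is quoted from \cite{EG} --- so there is no in-paper argument to compare yours against; I can only assess your proposal on its own terms. The complement translation, the recursion $k_t(H)=k_t(H-v)+k_{t-1}(H[N_H(v)])$, and the Pascal step are correct and do dispose of every configuration except the one you isolate: two non-adjacent vertices $u,v$ of degree $n-3$. That case is a genuine gap as written. Splitting the $t$-cliques by their intersection with $\{u,v\}$ gives $k_t(H)\le k_{t-1}(H[N_H(v)])+k_{t-1}(H[N_H(u)])+k_t(H-u-v)$, and even under the most favourable reading --- the trivial bounds $\binom{n-3}{t-1}$ on the first two terms and an (itself not directly available, since $H-u-v$ can still contain a vertex of degree $n-4$) inductive bound $\binom{n-4}{t}$ on the third --- the total is $\binom{n-2}{t}+\binom{n-4}{t-2}$, which overshoots the target whenever $t\le n-2$. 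Recovering that deficit requires genuinely using the fact that every common neighbour of $u$ and $v$ has at least two further non-neighbours, and that accounting (together with the strictness needed to exclude this case from the equality characterization) is precisely what you have deferred to ``a short calculation.'' It is the heart of the proof, not a routine verification.

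More seriously, the ``cleaner alternative'' you offer is based on a false statement: the hypothesis $\omega(H)\le n-2$ does \emph{not} imply $k_t(H)\le\binom{n-2}{t}$. Take $H=(K_{n-1}-e)\cup K_1$; then $\omega(H)=n-2$ but $k_t(H)=\binom{n-1}{t}-\binom{n-3}{t-2}=\binom{n-2}{t}+\binom{n-3}{t-1}>\binom{n-2}{t}$ for $3\le t\le n-2$ (for $n=6$, $t=3$ this is $7>4$). In complement form this is $G=K_1\vee(K_2\cup E_{n-3})$, which has a vertex of degree $1$ and $i_3(G)=7>4=i_3(K_{2,4})$ when $n=6$; so the minimum-degree hypothesis is doing essential work that a clique-number hypothesis cannot replace, and the proposed double induction on $n$ and $\omega$ cannot be made to close. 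The theorem itself is of course true, but your argument needs the two-nonadjacent-max-degree-vertices case worked out explicitly; the $\omega$-relaxation is not an escape route.
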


\begin{theorem}[\cite{GLS}]\label{thm-GLS}
Let $n \geq 2\delta$. For every $t \geq 3$, every $n$-vertex graph $G$ with minimum degree at least $\delta$ satisfies
\[
i_t(G) \leq i_t(K_{\delta,n-\delta})
\]
and when $3 \leq t \leq \delta$, $K_{\delta,n-\delta}$ is the unique extremal graph.
\end{theorem}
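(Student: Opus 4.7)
The plan is to prove $i_t(G) \leq i_t(K_{\delta, n-\delta}) = \binom{\delta}{t} + \binom{n-\delta}{t}$ (using the convention $\binom{a}{b} = 0$ when $b > a$) by combining a decomposition of the independent $t$-sets of $G$ with an induction. I would fix a vertex $v$ of minimum degree $d(v) = \delta$, which exists by hypothesis, and split the family $\mathcal{I}_t(G)$ of indep $t$-sets into three classes according to the intersection with $N[v]$: (i) the sets containing $v$, (ii) the sets disjoint from $N[v]$, and (iii) the sets avoiding $v$ but meeting $N(v)$. Classes (i) and (ii) together consist of the indep $t$-sets contained in $\{v\} \cup (V(G) \setminus N[v])$, a vertex set of size $n - \delta$; so by Pascal's identity, $|(\mathrm{i})| + |(\mathrm{ii})| \leq \binom{n-\delta-1}{t-1} + \binom{n-\delta-1}{t} = \binom{n-\delta}{t}$, matching the ``big-side'' term of the target.

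The main obstacle is to bound class (iii), the indep $t$-sets of $G - v$ meeting $N(v)$, by $\binom{\delta}{t}$. The naive per-vertex bound $\sum_{u \in N(v)} \binom{n-\delta-1}{t-1}$ is far too weak, so a subtler argument is needed. I would attempt an induction on $n$ with $\delta$ and $t$ fixed, base case $n = 2\delta$ where the target reduces to $2\binom{\delta}{t}$ and can be checked directly since the minimum-degree condition is already very constraining on so few vertices. For the inductive step, applying the hypothesis to $G - v$ (which has $n - 1$ vertices and minimum degree at least $\delta - 1$) would require a parallel induction on $\delta$, then subtracting off the contribution already captured by class (ii). A complementary approach, perhaps cleaner, is a compression argument: identify a local edge-swap that weakly increases $i_t$ while preserving minimum degree at least $\delta$, and iterate to reduce an arbitrary extremal $G$ to $K_{\delta, n-\delta}$. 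The delicate point in either route is that removing or swapping at a vertex of degree exactly $\delta$ can drop the minimum degree below $\delta$, so the induction/swap must be set up to account for this carefully.

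For the uniqueness statement when $3 \leq t \leq \delta$, I would follow the cases of equality through the decomposition. Equality in the Pascal step forces $G[V(G) \setminus N[v]]$ to be edgeless and, via the minimum-degree hypothesis applied to each vertex of $V(G) \setminus N[v]$, forces every such vertex to be adjacent to every vertex of $N(v)$; equality in the class (iii) bound forces $G[N(v)]$ itself to be edgeless. Together these pin down $G \cong K_{\delta, n-\delta}$ with $N(v)$ as the side of size $\delta$. The hypothesis $t \geq 3$ is essential because $i_2(G)$ counts non-edges, and sparse (near-)regular constructions of minimum degree $\delta$ produce many more non-edges than $K_{\delta, n-\delta}$, so the extremal family at $t = 2$ is entirely different.
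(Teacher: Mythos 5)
Note first that the paper does not prove this statement at all: it is quoted from Gan, Loh, and Sudakov \cite{GLS}, where it resolved a conjecture of Engbers and Galvin, and it is used here purely as a black box. So the only question is whether your outline would establish the theorem on its own, and it would not.

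Your decomposition is sound up to a point: classes (i) and (ii) are exactly the independent $t$-sets inside $\{v\}\cup(V(G)\setminus N[v])$, a set of $n-\delta$ vertices, so together they number at most $\binom{n-\delta}{t}$. The plan then hinges on bounding class (iii) by $\binom{\delta}{t}$, and that bound is false for general $G$. Take $\delta=2$, $t=3$, and $G=C_n$ with $n\geq 7$: here $\binom{\delta}{t}=\binom{2}{3}=0$, yet there are many independent $3$-sets avoiding $v$ that contain a neighbour of $v$. In such graphs classes (i) and (ii) fall well short of $\binom{n-\delta}{t}$, so the theorem holds only through a trade-off between the classes; a termwise bound along your split cannot see this. (An ordering/greedy argument of the flavour you gesture at yields only something like $\binom{n-\delta}{t}+\delta\binom{n-\delta-1}{t-1}$; closing the gap down to $\binom{n-\delta}{t}+\binom{\delta}{t}$ is precisely the hard content of \cite{GLS}.) The fallback routes you name --- induction on $n$ with a parallel induction on $\delta$, or a degree-preserving compression --- are exactly where the difficulty lives, and you leave both unexecuted; you yourself identify the obstruction (deleting or swapping at a vertex of degree exactly $\delta$ breaks the minimum-degree hypothesis) without resolving it. The uniqueness discussion inherits the same defect, since it reads equality conditions off a decomposition that does not yield the inequality. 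In short: cite \cite{GLS} for this theorem; proving it requires their argument, not this outline.
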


\begin{proof}[Proof of Theorem \ref{thm-2con3chrom}]
First, we analyze the case when $t=2$; here to maximize $i_2(G)$ we want to minimize $|E(G)|$. %For the cycle $C_n$ we have
%\[
%i_t(C_n) = \binom{n-t}{t} + \binom{n-t-1}{t-1}
%\]
%(see for example \cite{HopkinsStaton}).  
%If $n\geq 4$ is odd, the graph $C_n$ is $3$-chromatic and $2$-connected. % and $i_2(C_n) = \binom{n-2}{2} + (n-3) > \binom{n-2}{2} = i_2(K_{2,n-2}+e)$. 
A $2$-connected graph $G$ has minimum degree at least $2$, and so
\[
|E(G)| = \frac{1}{2}\sum_v d(v) \geq n,
\]
with equality if and only if $G$ is $2$-regular, which implies that
\[
i_2(G) \leq \binom{n}{2} - n = % \frac{n(n-3)}{2} = \binom{n-2}{2} + (n-3) = 
i_2(C_n).
\]
The equality characterization for $n$ odd follows readily as $C_n$ is the only $2$-connected $2$-regular graph.

When $n$ is even and $G$ is 2-connected, we still have the bound of $E(G) \geq n$ with equality if and only if $G$ is $2$-regular if and only if $G=C_n$.  Since in this case $C_n$ is not $3$-chromatic, this shows that 
\[
|E(G)| \geq n+1,
\]
which proves the inequality as for an $n$-vertex theta graph $\theta_{a,b,c}$ we have $|E(\theta_{a,b,c})|=n+1$.  We now work on the cases for equality.
%and so some vertex in $G$ has degree greater than $2$. This is enough to prove the inequality as for an $n$-vertex theta graph $\theta_{a,b,c}$ we have $|E(\theta_{a,b,c})| = n+1$; we now work on the cases for equality. 

Using that $G$ is not $2$-regular, by the handshaking lemma there must be at least two vertices of degree at least $3$, or one vertex of degree at least $4$. 
A single vertex of degree $4$ with the remaining vertices having degree $2$ is not possible in a $2$-connected $G$, as the deletion of the degree $4$ vertex disconnects the graph.  
So now suppose $G$ has degree $3$ vertices $v$ and $w$, and the remaining vertices of $G$ have degree $2$.  If two of the edges out of $v$ are on a cycle that misses $w$, then again the deletion of $v$ disconnects the graph, which contradicts that $G$ is $2$-connected.  So each edge out of $v$ must be on some path that ends at $w$, which implies that $G$ is a theta graph.  %Note that theta graphs are $2$-connected, but may not be $3$-chromatic.  

Given that $G$ is a theta graph $\theta_{a,b,c}$, we need the conditions on $a$, $b$, and $c$ so that $G$ is $3$-chromatic. By coloring $v$ and $w$ with different colors and then the paths between them, we see that the chromatic number is $2$ when $a$, $b$, and $c$ are all odd.  So at least one of $a$, $b$, and $c$ must be even, and by parity considerations not all of $a$, $b$, and $c$ are even, so one parameter is even and another is odd. These two paths form an odd cycle in the graph, which shows that $\theta_{a,b,c}$ is indeed $3$-chromatic.  This implies the characterization of equality. %This characterizes the graphs which are $2$-connected and $3$-chromatic which minimize $|E(G)|$, and so these maximize $i_2(G)$ with the characterization of equality evident.

Now we consider $t \geq 3$. When $n \leq 4$ there are no $n$-vertex $3$-chromatic graphs that have an independent set of size $t\geq 3$. 
Since $G$ has minimum degree at least $2$, Theorem \ref{thm-EG} implies that for every $n \geq 5$ and $t \geq 3$, we have %every $n$-vertex $2$-connected graph with $n \geq 4$ satisfies
\[
i_t(G) \leq i_t(K_{2,n-2}),
\]
with equality if and only if $G$ is $K_{2,n-2}$ or $K_2 \vee E_{n-2}$. Recalling that $K_{2,n-2}$ is bipartite, this proves the result and the characterization of equality for $n \geq 5$. 
\end{proof}

\subsection{Proof of Theorem \ref{thm-bigt}} Suppose that $n \geq 2\ell$. By Theorem \ref{thm-GLS}, $K_{\ell,n-\ell}$ is an $\ell$-connected graph that has the most number of independent sets of size $t \geq 3$.  When $t\geq \ell+1$, these independent sets consist of $t$ vertices from the size $n-\ell$ partition class.  So if $k \leq \ell$, then $i_t(G^*) = i_t(K_{\ell,n-\ell})$, and therefore $i_t(G) \leq i_t(G^*)$ for all $k$-chromatic $\ell$-connected graphs, where $k \leq \ell$, $t \geq \ell+1$, and $n \geq 2\ell$.

When $t = \ell \geq 3$, Theorem \ref{thm-GLS} gives that $K_{\ell,n-\ell}$ is the \emph{unique} $\ell$-connected graph with the most number of independent sets of size $t$.  Since for $t=\ell \geq 3$ we have $i_t(G^*) = i_t(K_{\ell,n-\ell})-1$ and for $k$-chromatic $\ell$-connected $G$ we have $i_t(G)<i_t(K_{\ell,n-\ell})$, this shows that $i_t(G) \leq i_t(G^*)$.

\section{The Case of $t=2$: Minimizing Edges}\label{sec:t=2}

In this section, we consider maximizing the number of independent sets of size $t=2$ in $k$-chromatic $\ell$-connected graphs. As previously mentioned, this problem is equivalent to minimizing the number of edges in such graphs, so our results and proofs are stated as such.

\begin{comment}
Here's a chart with some of these minimizing/maximizing edge type problems for the sake of literature search:

\begin{center}
\begin{tabular}{c|c|c}
&$k$-chromatic &$\ell$-connected\\
\hline
minimizing edges &$\binom{k}{2}$ (proof by contradiction) &$\lceil\frac{nl}{2}\rceil$ (Harary)\\
\hline
maximizing (consider critical case) &? &Ando and Egawa
\end{tabular}
\end{center}

\textcolor{blue}{LK: Did we say anything about the $\ell=1$ case? Should be a $K_k$ with a tree/forest hanging off of it} \textcolor{red}{JE: We have $\ell=1$ in \cite{EngbersErey}; it is cited there from a different paper. When $k=3$ any unicyclic graph with an odd cycle works (so there are just more extremal examples).}
\end{comment}

\subsection{Proof of Theorem \ref{thm-minedges1}}
We start by constructing a graph that achieves the bound. We make use of the $n$-vertex, $\ell$-connected Harary graph, which we denote by $H_{n, \ell}$. Recall that to construct $H_{n, \ell}$, place $n$ vertices $w_1$, $w_2$, \ldots, $w_n$ in order around a circle and join each vertex to the $\lfloor \frac{\ell}{2}\rfloor$ vertices closest to it in either direction.  In the event that $\ell$ is odd, then also join each vertex to the vertex directly opposite (or as opposite as possible when $n$ is odd). The Harary graph $H_{n, \ell}$ has $\lceil n\ell /2 \rceil$ edges, which is the minimum number of edges over all graphs with the same number of vertices and connectivity \cite{Harary}.

Consider the disjoint union of the complete graph $K_k$ and a Harary graph $H_{n-k, \ell}$. Since we are assuming $\ell < k-1$ and $\ell \le n-k$, we can choose $\ell$ vertices $v_1$, $v_2$, \ldots, $v_\ell$ from $K_k$ and $\ell$ adjacent vertices $w_1$, $w_2$, \ldots, $w_\ell$ along the circle in $H_{n-k, \ell}$ and connect these vertices via a matching, $v_iw_i$. Now starting with a terminal edge, $w_1w_2$, of the $\ell$-vertex path in $H_{n-k, \ell}$, we remove every other edge of the path. In the case where $\ell$ is odd and $n-k$ is also odd, then we use the one higher degree vertex from $H_{n-k, \ell}$ in our $\ell$-vertex path and delete both edges to the sides. Call this graph $G^*$ and let $H'_{n-k,\ell}$ denote the subgraph induced by the vertices of $H_{n-k,\ell}$. See Figure \ref{fig-}. The graph $G^*$ has \[{k \choose 2} + \left\lceil\frac{(n-k)\ell}{2}\right\rceil + \ell -\lfloor\ell /2\rfloor = \binom{k}{2} + \left\lceil \frac{(n-k+1)\ell}{2}\right\rceil \] edges and we claim this graph is $k$-chromatic and $\ell$-connected.

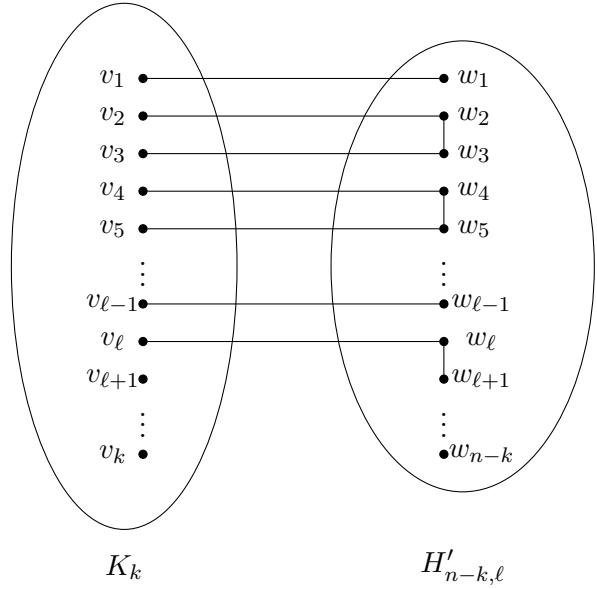
\begin{figure}[h]
\centering
\begin{tikzpicture}

\draw (-2.25,0) ellipse (1.5cm and 3.5cm);
\draw (-2.25,-4) node {$K_k$};

\node (v1) at (-2,2.5) [circle,draw,fill,scale=.3] {};
\node at (-2.4,2.5) {$v_1$};
\node (v2) at (-2,2) [circle,draw,fill,scale=.3] {};
\node at (-2.4,2) {$v_2$};
\node (v3) at (-2,1.5) [circle,draw,fill,scale=.3] {};
\node at (-2.4,1.5) {$v_3$};
\node (v4) at (-2,1) [circle,draw,fill,scale=.3] {};
\node at (-2.4,1) {$v_4$};
\node (v5) at (-2,0.5) [circle,draw,fill,scale=.3] {};
\node at (-2.4,0.5) {$v_5$};
\node (vdots1) at (-2,0) {$\vdots$};
\node (vellminus1) at (-2,-0.5) [circle,draw,fill,scale=.3] {};
\node at (-2.4,-0.5) {$v_{\ell-1}$};
\node (vell) at (-2,-1) [circle,draw,fill,scale=.3] {};
\node at (-2.4,-1) {$v_{\ell}$};
\node (vellplus1) at (-2,-1.5) [circle,draw,fill,scale=.3] {};
\node at (-2.4,-1.5) {$v_{\ell+1}$};
\node (vdots2) at (-2,-2) {$\vdots$};
\node (vk) at (-2,-2.5) [circle,draw,fill,scale=.3] {};
\node at (-2.4,-2.5) {$v_{k}$};

\draw (2.25,0) ellipse (1.75cm and 3cm);
\draw (2.25,-4) node {$H'_{n-k,\ell}$};

\node (w1) at (2,2.5) [circle,draw,fill,scale=.3] {};
\node at (2.4,2.5) {$w_1$};
\node (w2) at (2,2) [circle,draw,fill,scale=.3] {};
\node at (2.4,2) {$w_2$};
\node (w3) at (2,1.5) [circle,draw,fill,scale=.3] {};
\node at (2.4,1.5) {$w_3$};
\node (w4) at (2,1) [circle,draw,fill,scale=.3] {};
\node at (2.4,1) {$w_4$};
\node (w5) at (2,0.5) [circle,draw,fill,scale=.3] {};
\node at (2.4,0.5) {$w_5$};
\node (wdots1) at (2,0) {$\vdots$};
\node (wellminus1) at (2,-0.5) [circle,draw,fill,scale=.3] {};
\node at (2.5,-0.5) {$w_{\ell-1}$};
\node (well) at (2,-1) [circle,draw,fill,scale=.3] {};
\node at (2.5,-1) {$w_{\ell}$};
\node (wellplus1) at (2,-1.5) [circle,draw,fill,scale=.3] {};
\node at (2.5,-1.5) {$w_{\ell+1}$};
\node (wdots2) at (2,-2) {$\vdots$};
\node (wnminusk) at (2,-2.5) [circle,draw,fill,scale=.3] {};
\node at (2.5,-2.5) {$w_{n-k}$};

\foreach \from/\to in {v1/w1,v2/w2,v3/w3,v4/w4,v5/w5,vellminus1/wellminus1, vell/well, w2/w3, w4/w5, well/wellplus1}
		\draw (\from) -- (\to);

\end{tikzpicture}
\caption{The construction for $G^*$ when $\ell < k-1$ and $\ell \le n-k$.}
\label{fig-}
\end{figure}

Now $G^*$ is $k$-chromatic since the subgraph $K_k$ requires $k$ colors. 
We show the graph is also $\ell$-connected since any two vertices $v$ and $w$ are connected by $\ell$ disjoint paths. This is true if $v$ and $w$ both belong to the subgraph $K_k$ since $k>\ell$. %We claim this is also true if $v$ and $w$ belong to $H'_{n-k,\ell}$.

Suppose $v$ and $w$ both belong to  $H'_{n-k,\ell}$. By construction, the Harary graph $H_{n-k,\ell}$ is $\ell$-connected, so there are $\ell$ disjoint paths between any two vertices. We extend these disjoint paths to $H'_{n-k,\ell}$ over deleted edges, $w_iw_{i+1}$, by instead using edges $w_iv_i$, $v_iv_{i+1}$, and $v_{i+1}w_{i+1}$ in $G^*$ (with the obvious modification if $\ell$ and $n-k$ are odd and the degree $\ell+1$ vertex is internal on one of the paths). This covers all cases except when $v$ itself has degree $\ell+1$, in which case $v=w_j$ for some $j$.  But in this case since 
%
%In the case where both $\ell$ and $n-k$ are odd, $H_{n-k, \ell}$ has one higher degree vertex $w_j$. The path extension above does not preserve disjointness of two paths each using one of the deleted edges $w_{j-1}w_j$ and $w_{j}w_{j+1}$. Note that since 
$v$ has degree $\ell+1$, we can find $\ell$ disjoint paths between $v$ and any other vertex that excludes one of the edges $w_{j-1}v$ or $vw_{j+1}$; these $\ell$ disjoint paths can be extended to $H'_{n-k,j}$ as above.

Lastly, suppose $v$ belongs to the $K_k$ subgraph and $w$ belongs to the $H'_{n-k,\ell}$ subgraph. From $v$ there are $\ell$ disjoint paths to $H'_{n-k,\ell}$ each ending at one of $w_1$, $w_2$, \ldots, $w_\ell$. Since $H_{n-k,\ell}$ is $\ell$-connected, there exists $\ell$ disjoint paths from the subset of vertices $w_1$, $w_2$, \ldots, $w_\ell$ to the vertex $w$, and the same $\ell$ paths exists in $H'_{n-k,\ell}$. Therefore in all cases there are $\ell$ disjoint paths between the vertices $v$ and $w$.  Since deleting the endpoints of the matching in $K_k$ disconnects the graph, this shows that $G^*$ is $\ell$-connected.

\begin{proof}[Proof of Theorem \ref{thm-minedges1}]
Sharpness follows from the graph $G^*$. To prove the lower bound, let $G$ be a $k$-chromatic $\ell$-connected graph. We will consider two cases: if $G$ is $k$-critical and otherwise.

If $G$ is $k$-critical, then all vertices have degree at least $k-1$, so $G$ has at least $\frac{n(k-1)}{2}$ edges. Then the difference in the number of edges between $G$ and $G^*$ is at least
\begin{align*}
\frac{n(k-1)}{2}-\binom{k}{2} - \frac{(n-k+1)\ell}{2} &=\frac{1}{2}\left(n(k-1)-k(k-1)-(n-k+1)\ell\right)\\
&=\frac{1}{2}((k-1)(n-k)-(n-k)\ell-\ell)\\
&=\frac{1}{2}((n-k)(k-1-\ell) -\ell)
\end{align*}

Since $k-1>\ell$, we have $k-1-\ell\geq 1$, and combining this with $(n-k)\geq \ell$ gives $(n-k)(k-1-\ell)\geq \ell$.  Thus, 

\[\frac{1}{2}((n-k)(k-1-\ell) -\ell)\geq 0\]
and so the bound is correct in this case. 

Suppose $G$ is not $k$-critical.  Then $G$ has an (induced) $k$-critical subgraph.  This subgraph, $H$, has at least $k$ vertices and minimum degree at least $k-1$. Say $H$ has $k\leq x \leq n-1$ vertices.  Then $H$ has at least $\frac{x(k-1)}{2}$ edges.  Consider the vertices in $V(G)\setminus V(H)$.  Since $G$ is $\ell$-connected, there must be at least $\ell$ disjoint paths between $V(G)$ and $V(G)\setminus V(H)$.  This requires at least $\ell$ edges; assume there are $p \geq \ell$ edges with one endpoint in $V(H)$ and the other endpoint in $V(G) \setminus V(H)$.  Moreover, every vertex in $V(G)\setminus V(H)$ has to have minimum degree at least $\ell$, since $G$ is $\ell$-connected.  This requires a minimum of an additional 
\[ \dfrac{(n-x)\ell-p}{2}\]
edges with both endpoints in $V(G)\setminus V(H)$. 

In total, $G$ must have at least 
\[ \frac{x(k-1)}{2} + p + \dfrac{(n-x)\ell-p}{2} \geq \frac{x(k-1)}{2} + \ell + \dfrac{(n-x)\ell-\ell}{2}\]
edges. This bound is linear in $x$ with positive slope $\frac{k-1-\ell}{2}$, and so is minimized by the minimum value of $x$. Since $x\ge k$, we get 
\[ \frac{x(k-1)}{2} + \ell + \dfrac{(n-x)\ell-\ell}{2} \ge \frac{k(k-1)}{2} + \ell + \dfrac{(n-k)\ell-\ell}{2}\]
proving the claimed bound in this case. This finishes the proof.
\end{proof}

\subsection{Proof of Theorem \ref{thm-minedges2}}

We again start by constructing a graph that achieves the bound. Consider the disjoint union of the complete graph $K_k$ and the complete graph $K_{n-k}$.  Fix $\ell$ vertices $v_{1},v_{2},\ldots,v_{\ell} \in V(K_k)$, and label the vertices in $K_{n-k}$ by $w_1,\ldots,w_{n-k}$.  For a fixed $i$, add the $\ell-(n-k-1)$ edges joining $w_i$ and $v_j$ for each $j$ satisfying $i \leq j \leq i+(\ell-n+k)$. %\textcolor{blue}{Where indices are taken mod $k$ if I'm thinking about this right? JK - the highest $i$ is $n-k$ and $(n-k)+(\ell-n+k) = \ell$. So we will only end up connecting to the $\ell$ vertices $v_1,...,v_\ell$ anyway. - LK}  
Call this graph $G^*$, and note that $G^*$ has $\binom{k}{2} + \binom{n-k}{2} + (n-k)(\ell-(n-k-1))$ edges. See Figure \ref{fig-G*}.

\begin{comment}

\textcolor{red}{JE: The motivation here is that we're trying to do a matching between vertices in $K_k$ and $K_{n-k}$ (like in the previous case; note that $K_{n-k}$ is a Harary graph!), but a matching isn't going to give the correct minimum degree condition, since $K_{n-k}$ vertices have ``small'' degree.  So do a ``star-matching'' (my terminology?) with $\ell$ vertices from $K_k$:  A matching uses $K_{1,1}$'s; here we're using $K_{1,\ell-(n-k-1)}$'s to get the correct minimum degree, and done systematically so the overlap of stars is straightforward and nothing degenerate happens.  That was at least my thought here.}
\end{comment}

We first claim that $G^*$ is $k$-chromatic.  It requires $k$ colors on $K_k$. And each vertex $w_i$ can be colored with the color on $v_{i-1}$ (where we consider $v_0$ to be the vertex $v_{\ell}$).  This is a $k$-coloring of $G^*$. %\textcolor{blue}{As long as $\ell-(n-k-1)<k$, or $n>\ell+1$. Which it is, since $n>\ell+k$ and $k=1$ is silly (I think) -LK. Actually later you say $k\geq 4$, so this is fine.}

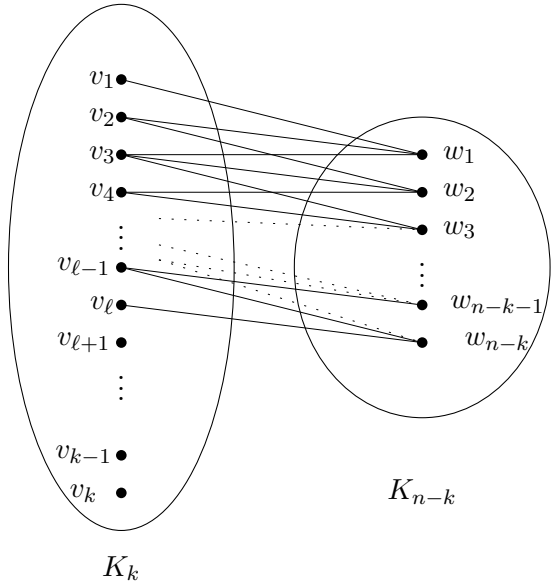
\begin{figure}[ht!]
\begin{center}

\begin{tikzpicture}
\draw (-2,0) ellipse (1.5cm and 3.5cm);
\draw (2,0) ellipse (1.7cm and 2cm);

\coordinate(v1) at (-2,2.5);
\coordinate(v2) at (-2,2);
\coordinate(v3) at (-2,1.5);
\coordinate(v4) at (-2,1);
\coordinate(vdots) at (-2,.5);
\coordinate(vellminus1) at (-2,0);
\coordinate(vell) at (-2,-.5);
\coordinate(vellplus1) at (-2,-1);
\coordinate(velldots) at (-2,-1.5);
\coordinate(vkminus1) at (-2,-2.5);
\coordinate(vk) at (-2,-3);

\coordinate(w1) at (2,1.5);
\coordinate(w2) at (2,1);
\coordinate(w3) at (2,.5);
\coordinate(wdots) at (2,0);
\coordinate(wnminuskminus1) at (2,-.5);
\coordinate(wnminusk) at (2,-1);

\fill (v1) circle (2pt);
\draw (-2.25,2.5) node {$v_1$};
\fill (v2) circle (2pt);
\draw (-2.25,2) node {$v_2$};
\fill (v3) circle (2pt);
\draw (-2.25,1.5) node {$v_3$};
\fill (v4) circle (2pt);
\draw (-2.25,1) node {$v_4$};
\draw (vdots) node {$\vdots$};
\fill (vellminus1) circle (2pt);
\draw (-2.5,0) node {$v_{\ell-1}$};
\fill (vell) circle (2pt);
\draw (-2.25,-.5) node {$v_{\ell}$};
\fill (vellplus1) circle (2pt);
\draw (-2.5,-1) node {$v_{\ell+1}$};
\draw (velldots) node {$\vdots$};
\fill (vkminus1) circle (2pt);
\draw (-2.5,-2.5) node {$v_{k-1}$};
\fill (vk) circle (2pt);
\draw (-2.5,-3) node {$v_{k}$};
\fill (w1) circle (2pt);
\draw (2.5,1.5) node {$w_{1}$};
\fill (w2) circle (2pt);
\draw (2.5,1) node {$w_{2}$};
\fill (w3) circle (2pt);
\draw (2.5,.5) node {$w_{3}$};
\draw (wdots) node {$\vdots$};
\fill (wnminuskminus1) circle (2pt);
\draw (3,-.5) node {$w_{n-k-1}$};
\fill (wnminusk) circle (2pt);
\draw (3,-1) node {$w_{n-k}$};

\tikzstyle{EdgeStyle}=[-,ultra thin]
	\Edge(v1)(w1);
    \Edge(v2)(w1);
    \Edge(v3)(w1);
    \Edge(v2)(w2);
    \Edge(v3)(w2);
    \Edge(v4)(w2);
    \Edge(v3)(w3);
    \Edge(v4)(w3);

    \Edge(wnminuskminus1)(vellminus1);
    \Edge(wnminusk)(vellminus1);
    \Edge(wnminusk)(vell);

\tikzstyle{EdgeStyle}=[loosely dotted,ultra thin]
	\Edge(-1.5,.65)(w3);
    \Edge(-1.5,.1)(wnminuskminus1);
    \Edge(-1.5,.3)(wnminuskminus1);
    \Edge(-1.5,.1)(wnminusk);

\draw (-2,-4) node {$K_k$};
\draw (2, -3) node {$K_{n-k}$};

\end{tikzpicture}
\end{center}

\caption{The graph $G^*$ when $\ell-(n-k-1)=3$; note that here $w_2$ is adjacent to $v_2$, $v_3$, and $v_4$.}
\label{fig-G*}
\end{figure}

Next, we claim that $G^*$ is $\ell$-connected; note that removing $v_{1},\ldots,v_{\ell}$ disconnects the graph (as $\ell<k$).  We claim that any two vertices $v$ and $w$ are connected by $\ell$ disjoint paths.  This is clear if $v$ and $w$ are both in $K_k$, since $\ell<k-1$. It is also clear if $v$ and $w$ are both in $K_{n-k}$, as there are $n-k-1$ paths in $K_{n-k}$, and the $\ell-(n-k-1)$ edges to $K_k$ from each vertex lead to $\ell-(n-k-1)$ other edge disjoint paths.  So assume $v$ is some vertex in $K_k$ and $w=w_1$.  We know that $w$ has neighbors $v_i$ for $1\leq i \leq \ell-n+k+1$; those have disjoint paths (with $0$ or $1$ edge) to $v$.  And furthermore the neighbors $w_m$, $m >1$, can use their edge to $v_{m+\ell-n+k}$ and then edge $v_{m+\ell-n+k}v$ to produce the remaining disjoint paths from $w$ to $v$.

\begin{proof}[Proof of Theorem \ref{thm-minedges2}]
Note that we must have $k \geq 4$, since $k \leq 3$ implies $\ell<2$, which contradicts the assumption of $\ell>1$. %, since in this case we have $\ell<2$, and the condition $\ell > n-k$ means $n<3+\ell \leq 4$.  So assume $k \geq 4$ in this section.  
Also note that the inequalities together imply that $n<2k-1$. Furthermore, $n=k$ is not possible, as then $G=K_k$ which is $k-1$ connected, but $\ell=k-1$ is not allowed.  Therefore we can assume $n>k$.

Sharpness comes from the graph $G^*$. Suppose $G$ is a $k$-chromatic $\ell$-connected graph with $\ell>n-k$ and $k-1>\ell>1$.  We use a result from Gallai \cite{Gallai}, which says that if $k \geq 4$ and $k+2 \leq n \leq 2k-1$, a $k$-critical graph on $n$ vertices satisfies
\[
|E(G)| \geq \frac{1}{2}\left( n(k-1) + (n-k)(2k-n)-2\right);
\]
there is a characterization of equality given in that paper as well.  It is noted in various places that no $k$-critical graph has $n=k+1$ vertices.
We consider two cases: $G$ is $k$-critical and otherwise.

If $G$ is $k$-critical, then since $k\geq 4$ and our ranges of $n$, $k$, and $\ell$ imply that $n<2k-1$, the result from Gallai \cite{Gallai} gives at least $\frac{1}{2}( n(k-1) + (n-k)(2k-n)-2)$ edges. Then the difference between $|E(G)|$ and $|E(G^*)|$ is at least

\begin{align*}
\frac{1}{2}&\left( n(k-1) + (n-k)(2k-n)-2\right) - \binom{k}{2} - \binom{n-k}{2} - (n-k)(\ell-(n-k-1))\\
&=\frac{n-k}{2}\left( (k-1) + (2k-n)-(n-k-1)-2\ell+2(n-k-1) \right) - 1\\
&= \frac{n-k}{2}\left( (k-1) + (2k-n)+(n-k-1)-2\ell \right) -1\\
&= \frac{n-k}{2}\left( (k-1) -\ell + (k-1)-\ell \right) -1\\
&= (n-k)(k-1-\ell) - 1.
%&=\frac{(n-k)}{2}\left(n(k-1)+(n-k)(2k-n)-2)-k(k-1)-(n-k)(n-k-1)-(n-k)(\ell-(n-k-1)) \right)-1
\end{align*}
%\textcolor{teal}{In the first line of the equalities above, I believe the term $2(n-k)(\ell-(n-k-1)$ should be $(n-k)(\ell-(n-k-1)$. Otherwise, the algebra checks out on my end. -TS} \textcolor{red}{JE: changed. Good catch. } 
Now $k-1-\ell>0$ and $n-k\geq 2$ (as there is no $k$-critical graph on $n=k+1$ vertices), and so the above expression is positive, which implies that bound is correct in this case.

Suppose now that $G$ is not $k$-critical.  Then $G$ again has an induced $k$-critical subgraph $H$ with $k \leq x \leq n-1$ vertices.  Since $n<2k-1$, if $x>k+1$ then the Gallai bound applies, so $H$ has at least $\frac{1}{2}( x(k-1) + (x-k)(2k-x)-2)$ edges. If instead $x=k$, the graph $H$ has $\frac{1}{2}k(k-1)$ edges. We cannot have $x=k+1$ as there is no $k$-critical graph on $x=k+1$ vertices.

Consider the vertices in $V(G) \setminus V(H)$. Each vertex must have at least $\ell$ disjoint paths to $V(H)$, and so in particular must have minimum degree at least $\ell$.  Therefore the degree sum of the vertices in $V(G) \setminus V(H)$ is at least $(n-x)\ell$.  There can be at most $\binom{n-x}{2}$ edges that contribute two to this degree sum, coming from edges with both endpoints in $V(G) \setminus V(H)$. This means there must be at least $(n-x)\ell - (n-x)(n-x-1) = (n-x)(\ell-(n-x-1))$ edges between $H$ and $G-H$.  And if there are $p$ edges missing inside the induced subgraph on $V(G) \setminus V(H)$, where $0 \leq p$, then we have at least $(n-x)(\ell-(n-x-1))+2p$ edges between $H$ and $G-H$.

Therefore the total number of edges in $G$ is at least
\[
\frac{( x(k-1) + (x-k)(2k-x)-2 \cdot \textbf{1}_{\{x\geq k+2\}})}{2} + \frac{(n-x)(n-x-1)}{2} - p + (n-x)(\ell-(n-x-1))+2p,
\]
where $\textbf{1}_{\{x\geq k+2\}}$ is the indicator function on the event $x \geq k+2$.  Minimizing this, we take $p=0$, giving at least
\[
\frac{( x(k-1) + (x-k)(2k-x)-2\cdot \textbf{1}_{\{x\geq k+2\}})}{2} + \frac{(n-x)(n-x-1)}{2} + (n-x)(\ell-(n-x-1))
\]
edges. The expression
\[
\frac{x(k-1) + (x-k)(2k-x)-2\cdot \textbf{1}_{\{x\geq k+2\}} + (n-x)(n-x-1) + 2(n-x)(\ell-(n-x-1))}{2}
\]
is a quadratic function of (real-valued) $x$ with leading coefficient $-1$.  By the Extreme Value Theorem, the minimum occurs over $k+2 \leq x \leq n-1$ at $x=k+2$ or $x=n-1$. We also need to compare these values to $x=k$, the other possible value for $x$. (We remark that we separate out the case when $x=k$ as the indicator function is not continuous, and so we cannot apply the Extreme Value Theorem on $k \leq x \leq n-1$.) When $x=k$ we have the bound
\[
\frac{k(k-1) + (n-k)(n-k-1) + 2(n-k)(\ell-(n-k-1))}{2}.
\]

We need to show that when $x=k+2$ or $x=n-1$, we have a larger bound. For $x=k+2$, the bound is
\[
\frac{(k+2)(k-1) + 2(k-2)-2 + (n-k-2)(n-k-3) + 2(n-k-2)(\ell-(n-k-3))}{2}
\]
and when $x=n-1$ the bound is
\[
\frac{(n-1)(k-1) + (n-1-k)(2k-n+1)-2  + 2\ell}{2}.
\]
Computing the $x=k+2$ count minus the $x=k$ count gives
\[
2(n-\ell)-7.
\]
Now for this bound we have $n \geq k+2$, so $\ell<k-1$ means $\ell < n-3$, or $4 \leq n-\ell$, so the difference in terms is positive in this case.

Computing the $x=n-1$ count minus the $x=k$ count gives
\[
(k-\ell)(n-1-k)-1;
\]
here $k-\ell\geq 2$ and $n-1>k$ (as $n \geq k+2$). This shows the difference is positive in this case.  Therefore, the minimum value occurs for $x=k$, proving the claimed bound.
\end{proof}

\section{Concluding Remarks}\label{sec-conclusion}
In this section we highlight a few open problems that are related to the contents of this paper.  In Theorem \ref{thm-asympresult}, we characterized the $n$-vertex $k$-chromatic $\ell$-connected graph with the maximum number of independent sets for large $n$. We expect the result to hold for all $n$ for which the graph $G^*$ is $k$-chromatic and $\ell$-connected.
\begin{conjecture}\label{conj-1}
Let $3 \leq k \leq \ell$ and $n \geq 2\ell$. If $G$ is an $n$-vertex $k$-chromatic $\ell$-connected graph, then 
\[
i(G) \leq i(G^*).
\]
\end{conjecture}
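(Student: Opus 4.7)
The plan is to prove the conjecture by splitting into two cases based on the independence number $\alpha(G)$.

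In the first case, where $\alpha(G)\ge n-\ell$, I would argue directly via a join decomposition. Let $I$ be a maximum independent set. Since $G$ is $\ell$-connected, every vertex of $G$ has degree at least $\ell$, and every vertex of $I$ has all of its neighbors in $V(G)\setminus I$; this forces $|V(G)\setminus I|\ge \ell$, and combined with $|I|\ge n-\ell$ yields $\alpha(G)=n-\ell$ and $|V(G)\setminus I|=\ell$. Moreover every vertex of $I$ must be adjacent to every vertex of $V(G)\setminus I$, so $G=H\vee E_{n-\ell}$ with $H=G[V(G)\setminus I]$ on $\ell$ vertices. From $\chi(G)=k$ and the join identity $\chi(A\vee B)=\chi(A)+\chi(B)$ one obtains $\chi(H)=k-1$. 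Theorem \ref{thm-EngbersEreyIndSets} then gives $i(H)\le k\cdot 2^{\ell-k+1}$ (with equality iff $H=K_{k-1}\cup E_{\ell-k+1}$), and the identity $i(A\vee B)=i(A)+i(B)-1$ yields $i(G)\le k\cdot 2^{\ell-k+1}+2^{n-\ell}-1=i(G^*)$.

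The second case, $\alpha(G)\le n-\ell-1$, is the main difficulty. Here I would bound $i(G)=\sum_{t\ge 0} i_t(G)$ term by term: for $t\ge 3$ by Theorem \ref{thm-GLS} in the form $i_t(G)\le i_t(K_{\ell,n-\ell})$, for $t>n-\ell-1$ by $i_t(G)=0$, for $t=2$ by $i_2(G)=\binom{n}{2}-|E(G)|$, and trivially for $t\in\{0,1\}$. Summing these and using $\binom{n}{2}-\binom{\ell}{2}-\binom{n-\ell}{2}=\ell(n-\ell)$, the desired inequality $i(G)\le i(G^*)$ reduces to the edge-count lower bound
\[
|E(G)|\ge \ell(n-\ell)+\bigl(2^{\ell}-k\cdot 2^{\ell-k+1}\bigr).
\]
For $n\ge 2\ell$ this is strictly stronger than the Harary bound $\lceil n\ell/2\rceil$ supplied by $\ell$-connectivity alone, so the extra edges must be produced from the $k$-chromatic hypothesis: one candidate source is a $k$-critical subgraph of $G$, whose vertices have internal degree at least $k-1$, contributing edges beyond those forced by $\ell$-connectivity alone.

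The main obstacle lies precisely in closing the Case 2 gap. A purely edge-counting route is not enough: for $n=6$ and $k=\ell=3$, the triangular prism $\overline{C_6}$ has only $9$ edges (below the target $11$) yet achieves $i(\overline{C_6})=13=i(G^*)$, with the shortfall accounted for by $i_3(\overline{C_6})=0<2=i_3(K_{3,3})$ rather than by extra edges. Thus the plan is instead to establish a refined form of Theorem \ref{thm-GLS} that, under the added hypothesis $\chi(G)\ge 3$ (equivalently, $G$ is not bipartite), quantifies the deficit $i_t(K_{\ell,n-\ell})-i_t(G)$ for $3\le t\le \ell$ in terms of the odd-cycle structure of $G$. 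Combined with the edge-count bounds from $\ell$-connectivity and $k$-chromaticity (cf.\ Theorems \ref{thm-minedges1} and \ref{thm-minedges2}), such a stability-type refinement should close Case 2 uniformly for all $n\ge 2\ell$. The hard part will be finding a single refined bound whose savings match the required $2^{\ell}-k\cdot 2^{\ell-k+1}$ both for $G^*$ itself and for alternative extremizers such as $\overline{C_6}$.
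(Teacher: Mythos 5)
This statement is labeled \textbf{Conjecture}~\ref{conj-1} in the paper; the authors explicitly offer it as an open problem (a strengthening of Theorem~\ref{thm-asympresult} that removes the ``$n$ sufficiently large'' hypothesis when $k\le\ell$) and do not prove it. So there is no proof of the paper's to compare against; I can only assess your attempt on its own merits.

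Your Case~1 ($\alpha(G)\ge n-\ell$) is correct and self-contained. The minimum-degree bound from $\ell$-connectivity forces $\alpha(G)\le n-\ell$, hence equality, every vertex of the maximum independent set $I$ is joined to all of $V(G)\setminus I$, and $G=H\vee E_{n-\ell}$ with $\chi(H)=k-1$ on $\ell$ vertices. Applying Theorem~\ref{thm-EngbersEreyIndSets} to $H$ and using $i(A\vee B)=i(A)+i(B)-1$ gives exactly $i(G)\le k\cdot 2^{\ell-k+1}+2^{n-\ell}-1=i(G^*)$. This is in fact the same structural endgame the authors reach in the $k\le\ell$ branch of the proof of Theorem~\ref{thm-asympresult} (after they deduce that $G'=G-L$ is edgeless); your derivation arrives there directly, without the stability steps, and does not need $n$ large.

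Case~2 ($\alpha(G)\le n-\ell-1$), however, is where the conjecture actually lives, and your proposal does not close it. You correctly compute that summing $i_t(G)\le i_t(K_{\ell,n-\ell})$ for $t\ge 3$ (Theorem~\ref{thm-GLS}) together with $i_2(G)=\binom{n}{2}-\abs{E(G)}$ would force $\abs{E(G)}\ge \ell(n-\ell)+2^{\ell}-k\cdot 2^{\ell-k+1}$, and your $\overline{C_6}$ example ($k=\ell=3$, $n=6$, $9$ edges, $i(\overline{C_6})=13=i(G^*)$, $\alpha=2$) correctly shows this edge bound is false for graphs that nonetheless satisfy the conjecture; the missing mass really does move between the $t=2$ and $t=3$ terms. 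The ``refined, non-bipartite stability version of Theorem~\ref{thm-GLS}'' you invoke to fix this is not stated, not proved, and not obviously true; without it Case~2 is a plan, not an argument. As written, then, this is a correct proof of the conjecture restricted to graphs with $\alpha(G)=n-\ell$, plus an honest and well-diagnosed but unresolved gap for $\alpha(G)<n-\ell$.
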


\begin{conjecture}\label{conj-2}
Let $2 \leq \ell < k$ and $n\neq 5$. If $G$ is an $n$-vertex $k$-chromatic $\ell$-connected graph, then
\[
i(G) \leq i(G^*).
\]
\end{conjecture}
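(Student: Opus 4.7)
The plan is to extend Theorem \ref{thm-asympresult} to all admissible $n$ by induction on $n$, focusing on the case $k > \ell$ that is the hypothesis of Conjecture \ref{conj-2}. The base case is $n = k$, where the only $k$-chromatic $\ell$-connected graph is $K_k = G^*$. The exclusion $n \neq 5$ is explained by the sporadic obstruction $C_5$ with $(k,\ell) = (3,2)$, since $i(C_5) = 11 > 10 = i(G^*)$; this essentially forces the base-case analysis to verify several small values of $n$ by hand before the inductive machinery kicks in.

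For the inductive step, fix an $n$-vertex $k$-chromatic $\ell$-connected graph $G$ and choose a vertex $v$ of minimum degree, so that $\deg(v) \geq \ell$. The key identity is
\[
i(G) = i(G - v) + i(G - N[v]).
\]
When $G - v$ remains $k$-chromatic and $\ell$-connected, the induction hypothesis gives $i(G - v) \leq i(G^*_{n-1}) = (k - \ell + 1)2^{n - 1 - k} + \ell$, and it suffices to establish
\[
i(G - N[v]) \leq (k - \ell + 1)2^{n - 1 - k}.
\]
This inequality is exactly what Theorem \ref{thm-EngbersEreyIndSets} applied to the $(n - 1 - \deg(v))$-vertex graph $G - N[v]$ would yield, provided one can first show $\chi(G - N[v]) \geq k - \ell$ and that $\deg(v) = \ell$. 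The equality case would force $G - N[v] \cong K_{k - \ell} \cup E_{n - k - 1}$, which matches precisely the structure of $G^*$ after deleting the closed neighborhood of a degree-$\ell$ vertex in $E_{n-k}$; the characterization $G = G^*$ should then follow from restoring $v$ and using that $G$ is $k$-chromatic and $\ell$-connected.

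The main obstacles are twofold. First, the induction hypothesis may fail to apply to $G - v$ when removing $v$ either drops the chromatic number (i.e., $v$ lies in every $k$-critical subgraph of $G$) or drops the connectivity below $\ell$; in that case one must instead analyze a minimum vertex cut $S$ of size $\ell$ in $G$, show that some component of $G - S$ is forced to be a single vertex, and push $G$ directly toward the $G^*$ shape. Second, the needed bound $\chi(G - N[v]) \geq k - \ell$ does not follow from $\chi(G) = k$ and $|N[v]| = \ell + 1$ alone (which only give $\chi(G - N[v]) \geq k - \ell - 1$), so one must exploit the minimum-degree selection of $v$ together with the $\ell$-connectivity to close this gap; this is precisely where the $C_5$ anomaly first appears, since $C_5 - N[v]$ has chromatic number $1 < k - \ell = 1$ with the wrong count of components, and closing this gap tightly (while also verifying the required small cases) is the delicate heart of the argument.
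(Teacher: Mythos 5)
This statement is an open conjecture in the paper (Conjecture~\ref{conj-2}), not a theorem: the authors prove only the special case $k=3$, $\ell=2$ in Section~\ref{sec-2con3chrom}, and explicitly state that the general case is expected but unproven. So there is no ``paper's own proof'' to compare against, and your attempt should be evaluated on whether it closes the gap the authors left open. It does not.

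You correctly set up the recursion $i(G) = i(G-v) + i(G-N[v])$ for a minimum-degree vertex $v$, and your diagnosis of the two obstacles is accurate, but both obstacles are genuine and your sketch does not resolve either. The more serious one is the chromatic number bound for $G-N[v]$. Deleting $|N[v]| = \deg(v)+1 \geq \ell+1$ vertices only guarantees $\chi(G-N[v]) \geq k - \ell - 1$, and plugging $m = k-\ell-1$, $\deg(v)=\ell$ into Theorem~\ref{thm-EngbersEreyIndSets} yields $i(G-N[v]) \leq (k-\ell)\,2^{n-k}$, which is \emph{larger} than the needed $(k-\ell+1)\,2^{n-1-k}$ whenever $k - \ell \geq 2$. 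So the naive bound fails outright for most of the parameter range, not just in edge cases; you would need a structural argument showing that no $\ell$-connected $k$-chromatic graph has a degree-$\ell$ vertex $v$ with $\chi(G-N[v]) < k-\ell$, and no such argument is sketched. The second obstacle (that $G-v$ may lose the chromatic number $k$ or the $\ell$-connectivity) is equally unresolved: you acknowledge that one would then need to analyze a minimum cut of size $\ell$ and ``push $G$ toward $G^*$,'' but this is precisely where the difficulty lives and nothing concrete is offered. Your identification of the $C_5$ anomaly and the base case $n=k$ are correct, but as it stands this is a plausible plan with two unfilled holes rather than a proof, which is consistent with the statement being a conjecture in the paper.
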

Conjecture \ref{conj-2} is true for $k=3$ and $\ell=2$ (for $n \neq 5$), as we showed in Section \ref{sec-2con3chrom}.
%\begin{conjecture}
%Let $k\geq 3$, $\ell \geq 2$ be fixed.
%\begin{enumerate}
%    \item If $k \leq \ell$, $n \geq 2\ell$, and $G$ is an $n$-vertex $k$-chromatic $\ell$-connected graph, then $i(G) \leq i(G^*)$.
%    \item If $k > \ell$, $n>k$, and $G$ is an $n$-vertex $k$-chromatic $\ell$-connected graph, then $i(G) \leq i(G^*)$.
%\end{enumerate}
%\end{conjecture}
%
There are also open questions related to the number of independent sets of size $t$.  We expect the following to hold, which extends Theorem \ref{thm-bigt} down to $t \geq 3$.
\begin{conjecture}\label{conj-3}
Let $3 \leq k \leq \ell$ and $n \geq 2\ell$. If $G$ is an $n$-vertex $k$-chromatic $\ell$-connected graph and $t \geq 3$, then
\[
i_t(G) \leq i_t(G^*).
\]
\end{conjecture}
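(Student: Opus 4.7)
The plan is to mirror the proof of Theorem~\ref{thm-asympresult} (Case~2, $k \leq \ell$), adapted to count independent sets of fixed size $t$. Since Theorem~\ref{thm-bigt} already handles $t \geq \ell$, I would restrict attention to $3 \leq t \leq \ell - 1$. The goal is to force any candidate extremal $G$ into the form $G = H \vee E_{n - \ell}$, where $H$ is a graph on $\ell$ vertices; then $\chi(G) = \chi(H) + 1$ forces $H$ to be $(k-1)$-chromatic, so $i_t(G) = \binom{n-\ell}{t} + i_t(H)$, and the target inequality $i_t(G) \leq i_t(G^*)$ reduces to showing that $K_{k-1} \cup E_{\ell - k + 1}$ maximizes $i_t$ over $(k-1)$-chromatic graphs on $\ell$ vertices.

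Concretely, I would first imitate Steps~1--3 of the proof of Theorem~\ref{thm-asympresult}: use a maximum-matching plus pigeonhole argument (invoking the minimum degree bound $\delta(G) \geq \ell$) to identify a set $L$ of $\ell$ vertices with at least $cn$ common neighbors $R$, which form an independent set by matching maximality. Next, I would argue that every vertex of $V(G) \setminus L$ lies in $R$ and that $L$ dominates $V(G) \setminus L$ completely; this is the analogue of Step~4 of Case~2, where we showed that $G'$ must be edgeless. With the structural reduction $G = G[L] \vee E_{n-\ell}$ in place, the $k$-chromaticity of $G$ forces $\chi(G[L]) = k - 1$. For the resulting optimization over $(k-1)$-chromatic graphs $H$ on $\ell$ vertices, note that $i_t(H)$ is anti-monotone under edge addition whenever such addition preserves $(k-1)$-chromaticity, and that $K_{k-1} \cup E_{\ell - k + 1}$ has the minimum number of edges among $(k-1)$-chromatic graphs on $\ell$ vertices. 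A compression argument that shifts the edges of any $(k-1)$-critical subgraph onto a fixed $(k-1)$-clique should then show that $K_{k-1} \cup E_{\ell-k+1}$ is the $i_t$-maximizer; the base case $t = 2$ is immediate, and the inductive step can be handled by conditioning on whether a fixed vertex lies in the independent set.

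The main obstacle is the structural step that isolates $G = H \vee E_{n-\ell}$. In the total-count proof, the dominant term $2^{n-\ell}$ is exponentially larger than the contribution of any independent set intersecting $L$, so the structural constraints on $G$ fall out of crude bounds such as the $3 \cdot 2^{n - \ell - 2}$ estimate in Case~2. For size-$t$ counts the outer contribution is $\binom{n-\ell}{t}$, which grows only polynomially in $n$, so the stability margin is much thinner. In particular, showing that $G - L$ must be edgeless (rather than merely sparse) will require a more refined counting: one must track size-$t$ independent sets that use a vertex from $L$ or cross an edge of $G - L$, and show that any such deviation strictly reduces $i_t(G)$ below $i_t(G^*)$. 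A natural fallback, if the clean structural reduction proves too delicate, is to first prove the result under a stronger hypothesis ($n$ sufficiently large in terms of $k$, $\ell$, and $t$) matching the form of Theorem~\ref{thm-asympresult}, leaving the small-$n$ regime for a separate finite case analysis.
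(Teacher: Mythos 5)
This statement is Conjecture~\ref{conj-3}, which the paper explicitly leaves open in its concluding remarks; there is no proof in the paper to compare your attempt against, so I am evaluating the proposal on its own merits. Your reduction to the range $3 \leq t \leq \ell - 1$ via Theorem~\ref{thm-bigt} is sound, and the final optimization (maximizing $i_t(H)$ over $(k-1)$-chromatic graphs $H$ on $\ell$ vertices, with $K_{k-1} \cup E_{\ell - k + 1}$ as maximizer) follows directly from the $\ell = 0$ result of Engbers--Erey cited in the introduction, so you need not build a compression argument from scratch. The gap lies in the structural step you flag as the main obstacle, and it is more than a thin margin---as written, the argument genuinely breaks.

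Steps~1--3 of the proof of Theorem~\ref{thm-asympresult} produce a set $L$ of $\ell$ vertices whose common neighborhood $R$ satisfies only $|R| \geq cn$ for the small constant $c = 1/(2\binom{6(k+\ell)}{\ell})$, not $|R| = n - \ell$. For the total count $i(G)$ this is enough: the bound $2^{\ell} \cdot 2^{n - \ell - cn}$ on independent sets meeting $L$ is exponentially smaller than the main term $2^{n-\ell}$, while a single edge in $G - L$ already costs a constant factor $3/4$. For fixed $t$ the asymptotics reverse. Independent sets meeting $L$ need only avoid $R$, so if $|R| = cn$ they may still draw from $\Theta(n)$ vertices outside $L \cup R$; this contributes up to about $\binom{n - |R|}{t} - \binom{n - |R| - \ell}{t} = \Theta(n^{t-1})$ sets. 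The penalty for an edge in $G - L$ is only about $\binom{n-\ell-2}{t-2} = \Theta(n^{t-2})$, and the target slack $i_t(G^*) - \binom{n-\ell}{t} = i_t(K_{k-1} \cup E_{\ell - k + 1})$ is a constant. So the counts from Steps~1--3 do not force $G - L$ to be edgeless, let alone force every vertex of $G - L$ to be adjacent to all of $L$; to close this you would first need a much stronger structural claim, such as $|R| \geq n - O(1)$, which the matching-and-pigeonhole step does not supply. Finally, even granting that structural reduction, the method would yield the bound only for $n$ large in terms of $k$, $\ell$, and $t$, while the conjecture asserts it for all $n \geq 2\ell$; this small-$n$ regime, where the stability slack disappears altogether, is itself a substantial open obstacle and not merely a finite case check.
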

It is also natural to conjecture that this behavior holds for $k>\ell$ as well.
\begin{conjecture}\label{conj-4}
Let $k \geq 4$ and $k > \ell$. If $G$ is an $n$-vertex $k$-chromatic $\ell$-connected graph and $t \geq 3$, then
\[
i_t(G) \leq i_t(G^*).
\]
\end{conjecture}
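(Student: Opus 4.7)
The plan is to adapt the stability technique used to prove Theorem~\ref{thm-asympresult} and reduce the size-$t$ problem to a per-size analog of Theorem~\ref{thm-EngbersEreyIndSets} on a smaller graph. Fix $k \geq 4$, $k > \ell$, and $t \geq 3$, and assume $n$ is sufficiently large. The key structural observation is that in $G^* = K_\ell \vee (K_{k-\ell} \cup E_{n-k})$, every independent set of size at least $2$ must avoid the $\ell$ apex vertices forming the $K_\ell$, so
\[
i_t(G^*) = i_t(K_{k-\ell} \cup E_{n-k}) = \binom{n-k}{t} + (k-\ell)\binom{n-k}{t-1}.
\]

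First I would follow Steps 1 and 2 in the proof of Theorem~\ref{thm-asympresult}: using the minimum degree condition $\delta(G) \geq \ell$, a maximum matching argument, and the pigeonhole principle, extract a set $L \subseteq V(G)$ with $|L| = \ell$ whose common neighborhood $R$ in $V(G) \setminus L$ has size at least $cn$, for some constant $c = c(k,\ell) > 0$. Then split the size-$t$ independent sets of $G$ according to whether they meet $L$. Any size-$t$ set containing at least one vertex of $L$ must avoid all of $R$, yielding at most $2^\ell \binom{n - \lceil cn \rceil}{t-1}$ such sets, which for large $n$ is dominated by the leading term $\binom{n-k}{t}$ of $i_t(G^*)$. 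The remaining size-$t$ independent sets are precisely counted by $i_t(G - L)$, and $G - L$ has $n - \ell$ vertices with $\chi(G - L) \geq k - \ell$ (since otherwise one could properly color $G - L$ and assign each vertex of $L$ a new color, producing a proper coloring of $G$ with fewer than $k$ colors).

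The core subproblem then becomes the following per-size analog of Theorem~\ref{thm-EngbersEreyIndSets}: \emph{for every $m$-vertex graph $H$ with $\chi(H) \geq k' \geq 1$ and every $t \geq 2$,}
\[
i_t(H) \leq i_t(K_{k'} \cup E_{m-k'}) = \binom{m-k'}{t} + k' \binom{m-k'}{t-1}.
\]
Applying this to $H = G - L$ with $k' = k - \ell$ and $m = n - \ell$ yields exactly the desired bound $i_t(G) \leq i_t(G^*)$. To establish the per-size claim, one natural route is induction on $m$ via vertex deletion: if $H$ has an isolated vertex $v$, the recursion $i_t(H) = i_t(H - v) + i_{t-1}(H - v)$ combined with Pascal's identity closes the induction. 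When $H$ has no isolated vertex, one would first remove an edge whose deletion preserves $\chi(H) \geq k'$ (which only increases $i_t$), reducing to the case where $H$ is $k'$-critical, and then handle the critical case by structural analysis of its interaction with the rest of $H$.

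The main obstacle will be proving the per-size claim. Theorem~\ref{thm-EngbersEreyIndSets} controls only the total count $i(H) = \sum_t i_t(H)$, and its standard proofs permit trade-offs between different sizes $t$ that the per-size statement forbids; classical shifting and Zykov symmetrization operations can preserve $\chi(H) \geq k'$ but do not in general monotonically increase $i_t(H)$ for each individual $t$. A secondary obstacle, as in Theorem~\ref{thm-asympresult}, is the large-$n$ hypothesis forced by the stability step; establishing Conjecture~\ref{conj-4} for all $n$ down to the natural threshold at which $G^*$ exists would require a separate small-$n$ analysis in the spirit of Theorem~\ref{thm-2con3chrom}.
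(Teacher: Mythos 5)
This statement is Conjecture \ref{conj-4}: the paper offers no proof of it (it is listed as an open problem, originating as a question in \cite{EngbersErey}), so there is nothing to compare your argument against; what matters is whether your plan would actually close the conjecture, and it would not. The fatal step is the error control in the stability argument. For the total count, the paper's method works because the number of independent sets meeting $L$ is at most $(1/2)^{cn}2^n$, which is exponentially small compared with the gaps between the candidate extremal values $(m+1)2^{n-\ell-m}$ for different chromatic numbers $m$ of $G-L$. For fixed $t$ the analogous quantities invert: the number of size-$t$ sets meeting $L$ is of order $n^{t-1}$ (your bound $2^{\ell}\binom{n-\lceil cn\rceil}{t-1}$, and no refinement gets below $\Theta(n^{t-1})$, since a set can meet $L$ in a single vertex), while the gap between $i_t(K_{k'}\cup E_{m-k'})$ and $i_t(K_{k'+1}\cup E_{m-k'-1})$ is only $k'\binom{m-k'-1}{t-2}=\Theta(n^{t-2})$, and the slack available when $\chi(G-L)=k-\ell$ but $G-L\neq K_{k-\ell}\cup E_{n-k}$ can be as small as $O(1)$. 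So "dominated by the leading term $\binom{n-k}{t}$" is true but irrelevant: you need the exact inequality $i_t(G)\leq i_t(G^*)$, and adding a $\Theta(n^{t-1})$ error on top of a bound for $i_t(G-L)$ that is already essentially $i_t(G^*)$ gives strictly more than $i_t(G^*)$. Indeed, since the error constant is roughly $\ell/(t-1)!$ against a secondary term $(k-\ell)\binom{n-k}{t-1}\approx (k-\ell)n^{t-1}/(t-1)!$, for $\ell\geq k/2$ your upper bound exceeds $i_t(G^*)$ by an amount larger than the entire secondary term.

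Two smaller points. First, the subproblem you flag as the main obstacle --- the per-size analog of Theorem \ref{thm-EngbersEreyIndSets}, that an $m$-vertex graph of chromatic number $k'$ has $i_t\leq \binom{m-k'}{t}+k'\binom{m-k'}{t-1}$ --- is already known: the introduction states that the $\ell=0$ case for independent sets of each fixed size was determined in \cite{EngbersErey}, and the monotonicity in $k'$ that you need (via Pascal's identity) does hold. So the genuine difficulty is not there but in the error-versus-gap mismatch above; a correct proof would need either a sharper structural dichotomy (e.g., showing that any graph with a nonempty independent set of size $t$ meeting $L$ loses at least $\Theta(n^{t-1})$ from $i_t(G-L)$) or an entirely different, non-stability argument in the spirit of Theorem \ref{thm-GLS}. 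Second, even if repaired, your approach addresses only large $n$, whereas the conjecture carries no such restriction.
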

These last two conjectures appeared as questions in \cite{EngbersErey}. We note that the corresponding results for $k=3$ and $\ell=2$ are shown in Theorem \ref{thm-2con3chrom}, and that the cases when $\ell=0$ and $\ell=1$ appear in \cite{EngbersErey}.


\begin{thebibliography}{10}

\bibitem{AlexanderCutlerMink}
J. Alexander, J. Cutler, and T. Mink, Independent sets in graphs with given minimum degree, {\em Electronic J. Combin.} {\bf 19(3)} (2012), \#P37.

\bibitem{Westetal}
J. Bhasker, T. Samad, and D. West, Size, chromatic number, and connectivity, {\em Graphs Combin.} {\bf 10} (1994), 209--213.

\bibitem{Cutler}
J. Cutler, Coloring graphs with graphs: a survey, {\em Graph Theory Notes N.Y.} {\bf 63} (2012), 7--16.

\bibitem{Dirac}
G.A. Dirac, The number of edges in critical graphs, {\em J. Reine Angew. Math.} {\bf 268/269} (1974) 150--164.

\bibitem{EngbersErey} J. Engbers and A. Erey, Extremal colorings and independent sets, {\em Graphs Combin.} {\bf 34} (2018), 1347--1361.

\bibitem{EG} J. Engbers and D. Galvin, Counting independent sets of a fixed size in graphs with a given minimum degree, {\em J. Graph Theory} {\bf 76} (2014), 149--168.

\bibitem{FoxHeManners} J. Fox, X. He, and F. Manners, A proof of Tomescu's graph coloring conjecture, {\em J. Comb. Theory Ser. B} {\bf 136} (2019), 204--221.

\bibitem{Gallai-I}
T. Gallai, Kritische Graphen I, {\em Publ. Math. Inst. Hungar. Acad. Sci.} {\bf 8} (1963), 165--192.

\bibitem{Gallai}
T. Gallai, Kritische Graphen II, {\em Publ. Math. Inst. Hungar. Acad. Sci.} {\bf 8} (1963), 373--395.

\bibitem{GLS} W. Gan, P. S. Loh, and B. Sudakov, Maximizing the number of independent sets of a fixed size, {\em Combin. Probab. Comput.} {\bf 24(3)} (2015), 521--527.

\bibitem{Harary} F. Harary, The Maximum Connectivity of a Graph, {\em Proc. Nat. Acad. Sci. USA} {\bf 48} (1962), 1142--1146.

%\bibitem{HopkinsStaton}  G. Hopkins and W. Staton, Some identities arising from the Fibonacci numbers of certain graphs, {\em Fibonacci Quart.} {\bf 22} (1984), 255--258.

\bibitem{HuaZhang} H. Hua and S. Zhang, Graphs with given number of cut vertices and extremal Merrifield-Simmons index, {\em Discrete Appl. Math.} {\bf 159} (2011), 971--980.

%\bibitem{ChromaticHarary} A. P. Kazemi
% and 
%P. Jalilolghadr, Chromatic number of Harary graphs, {\em Tbil. Math. J.} {\bf 9} (2016), 271--278.

\bibitem{Kostochka}
A. V. Kostochka and M. Stiebitz, Colour-critical graphs with few edges, {\em Discrete Math.} {\bf 191} (1998), 125--137

\bibitem{Krivelevich1}
M. Krivelevich, An improved bound on the minimal number of edges in color-critical graphs, {\em Electron J. Combin.,} {\bf 5} (1998) \#R4.

\bibitem{Krivelevich2}
M. Krivelevich, On the minimal number of edges in color-critical graphs, {\em Combinatorica} {\bf 17} (1997) 401--426.

\bibitem{MerrifieldSimmons}
R. Merrifield and H. Simmons, {\em Topological Methods in Chemistry}, Wiley, New York, 1989.

\bibitem{ProdingerTichy}
H. Prodinger and R. Tichy,  Fibonacci numbers of graphs, {\em Fibonacci Quart.} {\bf 20} (1982), 16--21.

\bibitem{Zhao}
Y. Zhao, Extremal regular graphs: independent sets and graph homomorphisms, {\em Amer. Math. Monthly} {\bf 124} (2017), 827--843.
\end{thebibliography}
\end{document}